\newcommand{\Email}[1]{{\sl E-mail address:\/} {\rm\textsf{#1}}}
\newtheorem{thm}{Theorem}
\newtheorem{cor}[thm]{Corollary}
\newtheorem{lem}[thm]{Lemma}
\newtheorem{prop}[thm]{Proposition}
\newenvironment{ThmMain}[1]{\par\smallskip\noindent{\bf Theorem #1.}\sl}{\par\smallskip\noindent}
\newcommand{\R}{{\mathbb R}}
\renewcommand{\S}{{\mathbb S^{d-1}}}
\newcommand{\N}{{\mathbb N}}
\newcommand{\ir}[1]{\int_{\R}{#1}\;ds}
\newcommand{\ird}[1]{\int_{\R^d}{#1}\;dx}
\newcommand{\nrm}[2]{\|{#1}\|_{L^{#2}(\R^d)}}
\newcommand{\icnd}[1]{\int_{\mathcal C}{#1}\;dy}
\newcommand{\be}[1]{\begin{equation}\label{#1}}
\newcommand{\ee}{\end{equation}}
\renewcommand{\(}{\left(}
\renewcommand{\)}{\right)}
\newcommand{\finprf}{\unskip\null\hfill$\;\square$\vskip 0.3cm}
\def\cprime{$'$}
\begin{document}
\title[A logarithmic Hardy inequality]{A logarithmic Hardy inequality}
\author[M. del Pino, J. Dolbeault, S. Filippas, and A. Tertikas]{Manuel del Pino, Jean Dolbeault, Stathis Filippas, and Achilles Tertikas}
\address{M. del Pino: Departamento de Ingenier\'{\i}a Matem\'atica and CMM, Universidad de Chile, Casilla 170 Correo~3, Santiago, Chile.
\Email{delpino@dim.uchile.cl}\newline\indent
J. Dolbeault: Ceremade, Universit\'e Paris-Dauphine, Place de Lattre de Tassigny, 75775 Paris Cedex~16, France.
\Email{dolbeaul@ceremade.dauphine.fr}\newline\indent
S. Filippas: Department of Applied Mathematics, University of Crete, Knossos Avenue, 714 09 Heraklion
\& Institute of Applied and Computational Mathematics,
FORTH, 71110 Heraklion, Crete, Greece.
\Email{filippas@tem.uoc.gr}
\newline\indent
A. Tertikas: Department of Mathematics, University of Crete, Knossos Avenue, 714 09 Heraklion
\& Institute of Applied and Computational Mathematics,
FORTH, 71110 Heraklion, Crete, Greece.
\Email{tertikas@math.uoc.gr}}
\date{\today}
\begin{abstract}
We prove a new inequality which improves on the classical Hardy inequality in the sense that a nonlinear integral quantity with super-quadratic growth, which is computed with respect to an inverse square weight, is controlled by the energy. This inequality differs from standard logarithmic Sobolev inequalities in the sense that the measure is neither Lebesgue's measure nor a probability measure. All terms are scale invariant. After an Emden-Fowler transformation, the inequality can be rewritten as an optimal inequality of logarithmic Sobolev type on the cylinder. Explicit expressions of the sharp constant, as well as minimizers, are established in the radial case. However, when no symmetry is imposed, the sharp constants are not achieved among radial functions, in some range of the parameters.
\end{abstract}
\keywords{Hardy inequality; Sobolev inequality; interpolation; logarithmic Sobolev inequality; Hardy-Sobolev inequalities; Caffarelli-Kohn-Nirenberg inequalities; scale invariance; Emden-Fowler transformation; radial symmetry; symmetry breaking\newline
{\it Mathematics Subject Classification (2000).\/}
26D10; 46E35; 58E35}
\maketitle
\thispagestyle{empty}

\section{Introduction and main results}\label{Sec:Setting}

The classical Hardy inequality in $\R^d$, $d\ge 3$, states that for any smooth, compactly supported function $u\in \mathcal D(\R^d)$, the following inequality holds:
\be{1}
\ird{\frac{|u|^2}{|x|^{2}}}\,\le\,\frac 4{(d-2)^2}\ird{|\nabla u|^2}\;.
\ee
The constant $4/{(d-2)^2}$ is the best possible one. Many studies have been devoted to extensions and improvements of Hardy's inequality in bounded domains containing zero. In this direction, the first result is due to Brezis and V\'{a}zquez; see \cite{BrV}. In \cite{MR1760280}, nonlinear improvements have been established, whereas in \cite{MR1918494,MR2462588,MR1862130} linear and Sobolev type improvements are given. In \cite{Adi-Filippas-Tertikas}, the best constant in the correction term of Sobolev type is computed. We also refer to \cite{Cianchi-Ferone} for improvements involving nonstandard correction terms. A recent trend seems to be oriented towards weights involving a distance to a manifold rather than a distance to a point singularity; see for instance \cite{MR1918928,MR2060479, MR2198838,Tertikas-Tintarev}. In particular, when taking distance to the boundary, the dependence of the correction term on the geometry of the domain has been established in \cite{MR1892180, MR2214621,MR2354734}. In the special case of the half-space in three space dimensions, the best constant of the Sobolev term in the improvement of Hardy's inequality has been found in \cite{MR2424899} and it turns out to be the best Sobolev constant.

On the other hand a subject of particular interest has been the analysis of the link between Hardy's inequality \eqref{1} and Sobolev's inequality. A family of inequalities that interpolate between Hardy and Sobolev inequalities is given by the {\em Hardy-Sobolev inequality},
\be{Ineq:HS}
\(\ird{\frac{|u|^p}{|x|^{d-\frac{d-2}2\,p}}}\)^\frac 2p\leq\mathsf C_{\rm HS}(p)\ird{|\nabla u|^2}
\ee
for any $u\in \mathcal D(\R^d)$, where $2\leq p\leq{2\,d}/(d-2)$, $d\ge3$, for a certain $\mathsf C_{\rm HS}(p)>0$. Extremals for \eqref{Ineq:HS} are radially symmetric and the best constant $\mathsf C_{\rm HS}(p)$ can be explicitly computed: see \cite{MR1223899,MR1731336,Catrina-Wang-01,0902}. We shall recover the expression of $\mathsf C_{\rm HS}(p)$ at the end of Section~\ref{Sec:inTerpRadial}. Extensions and improvements of the Hardy-Sobolev inequalities, and more generally of the Caffarelli-Kohn-Nirenberg inequalities established in \cite{Caffarelli-Kohn-Nirenberg-84}, have been the object of many papers. We refer the reader for instance to \cite{MR1918928,MR2003359,MR2142067,MR2198838,Tertikas-Tintarev,Adi-Filippas-Tertikas} for various contributions to this topic.

\medskip The purpose of this paper is to investigate the connection between \eqref{1} and another classical Sobolev type inequality: the optimal \emph{logarithmic Sobolev inequality} in $\R^d$ established in \cite{Gross75} which, expressed in a scale invariant form due to Weissler in \cite{MR479373}, reads
\be{21}
\ird{ |u|^2 \log {|u|^2}}\,\le \, \frac d2\, \log \left (\frac 2{\pi\,d\,e}\ird { |\nabla u|^2}\right)
\ee
for any $u\in H^1(\R^d)$ such that $\ird{u^2}=1$. We point out a parallel between these inequalities: just like~ \eqref{1} is an endpoint of the family \eqref{Ineq:HS}, that connects with Sobolev's inequality, Inequality \eqref{21} can be viewed as an endpoint of a family of optimal Gagliardo-Nirenberg inequalities that also connects to Sobolev's inequality; see \cite{MR1940370,MR1957678} for more details.

We emphasize that Hardy's inequality \eqref{1} in $\R^d$ cannot be improved in the usual sense, that is, there is no nontrivial potential $V \geq 0$ and no exponent $q>0$ such that, for any function $u$,
\[
\mathsf C\(\ird{V(x)\,|u|^q}\)^{2/q}\le\,\frac 4{(d-2)^2}\ird{|\nabla u|^2}-\ird{\frac{|u|^2}{|x|^{2}}}
\]
for some positive constant $\mathsf C$, as one can easily see by testing the above inequality with $u_{\epsilon}(x)=|x|^{-\frac{d-2}{2}+\epsilon}$, $|x|\leq 1$, and $u_{\epsilon}(x)=|x|^{-\frac{d-2}{2}-\epsilon}$, $|x|>1$, and sending $\epsilon$ to zero.

Instead of improving on the potential, we study here the possibility of improving on the control of $u$. The weight is fixed to be $1/|x|^2$ and we try to get a control on $|u|^2\log |u|^2$ instead of a control on $|u|^2$ only, as can sometimes be done for inequalities which appear as endpoints of a family, like \eqref{21}. As a result, we obtain inequalities of logarithmic Sobolev type, with weight $1/|x|^2$ in the term involving the logarithm. Such an inequality is somewhat unusual, because in most of the cases, logarithmic Sobolev inequalities involve bounded positive measures. The euclidean case with Lebesgue's measure is an exception and can actually be reinterpreted in terms of the gaussian measure, see for instance \cite{MR2079069,MR2201954} for some recent contributions in this direction. In the case of bounded measures, there is a huge literature: one can refer to \cite{MR0311856,MR1682772,ca-ba-ro2} for a few key contributions.

The logarithmic Sobolev and Hardy inequalities play an important role in a number of instances. The first one is a very natural tool for obtaining intermediate asymptotics for the heat equation, see \cite{MR772092,MR1447044,MR1842428,0528,MR2448650,bartier2009improved} with natural extensions to nonlinear diffusions (see for instance \cite{Blanchet:2009sf,bonforte-2009} and references therein). These inequalities are also useful in obtaining heat kernel estimates (see for instance \cite{Gross75,MR1103113}). A~related logarithmic Sobolev inequality recently appeared in \cite{MR2308757,FMT}, where it was used for obtaining upper bounds for the heat kernel of a degenerate equation.

\medskip We shall denote by $ \mathcal D^{1,2}(\R^d)$ the completion of $\mathcal D (\R^d)$ under the $L^2(\R^d)$ norm of the gradient of $u$. Let
\[
\mathsf S= \frac 1{\pi\,d\,(d-2)}\,\left[\frac{\Gamma\(d\)}{\Gamma\(\frac d2\)}\right]^\frac 2d=\mathsf C_{\rm HS}\(\frac{2\,d}{d-2}\)
\]
be the optimal constant in Sobolev's inequality, according to \cite{MR0448404,MR0463908}. Our first result states the validity of the following {\em logarithmic Hardy inequality}.\par\medskip\noindent
\begin{ThmMain}{A} Let $d\ge 3$. There exists a constant $\mathsf C_{\rm LH}\in (0, \mathsf S]$ such that, for all $u\in \mathcal D^{1,2}(\R^d)$ with $ \ird{ \frac{|u|^2}{|x|^2}}= 1$, we have
\be{Ineq:LogHardy}
\ird {\frac{|u|^2}{|x|^{2}}\,\log \(|x|^{d-2} |u|^2 \) } \leq \frac d2\,\log \left[ \mathsf C_{\rm LH}\,\ird{|\nabla u|^2}\right].
\ee
\end{ThmMain}
Inequality \eqref{Ineq:LogHardy} can be viewed as an infinitesimal form of the Hardy-Sobolev inequality at $p=2$: we observe that its left hand side is nothing but the derivative in $p$ at $p=2$ of the left hand side of \eqref{Ineq:HS}, up to a factor~$2$. Compared to an entropy term with respect to the measure $|x|^{-2}\,dx$, there is however a $\log(|x|^d)$ term. Such a term is easily recovered by scaling considerations and compensates for the presence of a superquadratic nonlinearity $|u|^2\log|u|^2$. The quantities involved in \eqref{Ineq:LogHardy} give a precise account of the fact that, to exert control by the Dirichlet integral of a power larger than two of $u$, the singularity has to be at the same time milder.

It is natural to search for the optimal constant and extremals for Inequality \eqref{Ineq:LogHardy}. Our second result answers this question in the class of radially symmetric functions, depending only on $|x|$, $x\in\R^d$.
\begin{ThmMain}{B} Let $d\ge 3$. If $u =u(|x|)\in \mathcal D^{1,2}(\R^d)$ is radially symmetric, and $ \ird{ \frac{|u|^2}{|x|^2}}= 1$, then
\[
\ird {\frac{|u|^2}{|x|^{2}}\,\log \(|x|^{d-2}|u|^2 \) } \leq \frac d2 \log \left[\mathsf C^*_{\rm LH}\,\ird{|\nabla u|^2}\right].
\]
where
\[
\mathsf C^*_{\rm LH}:=\frac 4d\,\frac{\left[\Gamma\(\frac d2\)\right]^\frac 2d}{\pi\,(8\,\pi\,e)^\frac 1d}\, \left[\frac{d-1}{(d-2)^2}\right]^{1-\frac 1d}.
\]
Equality in the above inequality is achieved by the function
$$ u= \frac {\tilde u}{\ird{ \frac{|\tilde u|^2}{|x|^2}}}\quad \hbox{\rm where}\quad \tilde u(x)= |x|^{-\frac{d-2}2}\,\exp\(-\tfrac{(d-2)^2}{4\,(d-1)}\big[\log |x|\,\big]^2\)\,.$$
\end{ThmMain}
For $d \ge 2$ and $a<(d-2)/2$, by starting from a more general weighted Hardy inequality,
\be{Ineq:HaGen}
\ird{\frac{|u|^2}{|x|^{2\,(a+1)}}}\leq \frac 4{(d-2-2\,a)^2}\ird{\frac{|\nabla u|^2}{|x|^{2\,a}}}\;,
\ee
we prove the validity of a whole class of \emph{weighted logarithmic Hardy inequalities}. If we denote by $ \mathcal D^{1,2}_{a}(\R^d)$ the completion with respect to the norm defined by the right hand side of \eqref{Ineq:HaGen} of $\mathcal D (\R^d\setminus\{0\})$ if $d\ge 2$ and of $\{u\in\mathcal D(\R)\,:\,u'(0)=0\}$ if $d=1$, our result reads:
\begin{ThmMain}{A'} Let $d\ge 1$. Suppose that $a<(d-2)/2$, $\gamma \ge d/4$ and $\gamma>1/2$ if $d=2$. Then there exists a positive constant $\mathsf C_{\rm GLH}$ such that, for any $u \in \mathcal D^{1,2}_{a}(\R^d)$ normalized by $ \ird{ \frac{|u|^2}{|x|^{2\,(a+1)}}}= 1$, we have
\be{Ineq:GLogHardy}
\ird{\frac{|u|^2}{|x|^{2\,(a+1)}}\,\log \(|x|^{d-2-2\,a}\,|u|^2 \)}\leq 2\,\gamma\,\log\left[\mathsf C_{\rm GLH}\, \ird{\frac{|\nabla u|^2}{|x|^{2\,a}}}\right]\,.
\ee
\end{ThmMain}
On the other hand, in the radial case, we have a more general family of sharp inequalities:
\begin{ThmMain}{B'} Let $d \ge 1$, $a <(d-2)/2$ and $\gamma\ge1/4$. If $u =u(|x|)\in \mathcal D^{1,2}_a(\R^d)$ is radially symmetric, and $ \ird{ \frac{|u|^2}{|x|^{2\,(a+1)}}}= 1$, then
\be{Ineq:GLogHardyRad}
\ird{\frac{|u|^2}{|x|^{2\,(a+1)}}\,\log \(|x|^{d-2-2\,a}\,|u|^2 \)}\leq 2\,\gamma\,\log\left[\mathsf C_{\rm GLH}^*\, \ird{\frac{|\nabla u|^2}{|x|^{2\,a}}}\right]\,,
\ee
where
\be{Cglh}
\mathsf C_{\rm GLH}^*= \frac{1}{\gamma}\, \frac{ \left[ \Gamma \(\frac{d}{2}\) \right]^{\frac{1}{2\,\gamma}}}{(8\,\pi^{d+1}\,e)^{\frac{1}{4\,\gamma}}} \(\frac{4\,\gamma -1}{(d-2-2\,a)^2}\)^{ \frac{4\,\gamma -1}{4\,\gamma}}\quad\mbox{if}\quad\gamma>\frac 14\quad\mbox{and}\quad\mathsf C_{\rm GLH}^*= 4\, \frac{ \left[ \Gamma \(\frac{d}{2}\) \right]^2}{8\,\pi^{d+1}\,e}\quad\mbox{if}\quad\gamma=\frac 14\;.
\ee
If $\gamma>\frac 14$, equality in \eqref{Ineq:GLogHardyRad} is achieved by the function
$$ u= \frac {\tilde u} {\ird{ \frac{|\tilde u|^2}{|x|^2}}}\quad \hbox{where}\quad \tilde u(x)= |x|^{-\frac{d-2-2\,a}2}\,\exp\(-\tfrac{(d-2-2\,a)^2}{4\,(4\,\gamma-1)}\big[\log |x|\,\big]^2\)\,.$$
\end{ThmMain}
Theorems A and B are special cases of Theorems A' and B' corresponding to $a=0$, $\gamma=d/4$, $d\ge 3$. The family of inequalities of Theorem B' imply on the one hand the logarithmic Sobolev inequality, and on the other hand the Hardy inequality, with optimal constants, as we shall see in Section~\ref{Sec:LogRadial}. In dimension $d=1$, radial symmetry simply means that functions are even.

We notice that Inequalities \eqref{Ineq:GLogHardy} and \eqref{Ineq:GLogHardyRad} are both homogeneous and scale invariant. Actually, all integrals are individually scale invariant, in the sense that their values are unchanged if we replace $u(x)$ by $u_ \lambda(x)=\lambda^{(d-2-2\,a)/2}\,u(\lambda\,x)$. This is of course consistent with the fact that the inequalities behave well under the Emden-Fowler transformation
\be{Transf:Emden-Fowler}
u(x)=|x|^{-\frac{d-2-2\,a}2}\,w(y)\quad\mbox{with}\quad y=(s,\omega):=\(-\log |x|,\frac x{|x|}\)\in\mathcal C := \R \times \S
\ee
and have an equivalent formulation on the cylinder $\mathcal C$, which goes as follows.
\begin{ThmMain}{A''} Let $d\ge 1$, $a <(d-2)/2$, $\gamma \ge d/4$ and $\gamma>1/2$ if $d=2$. Then, for any $w \in H^1(\mathcal C)$ normalized by $ \icnd{w^2}= 1$, we have
\be{Ineq:GLogHardy-w}
\icnd{|w|^2 \, \log |w|^2 }\leq 2\,\gamma\,\log\left(\mathsf C_{\rm GLH}\, \left[\icnd{|\nabla w|^2}+ \frac14\,(d-2-2\,a)^2\right] \right)\,.
\ee
\end{ThmMain}
The optimal constant $\mathsf C_{\rm GLH}$ is the same in Theorems A' and A''. Similarly, to the case of radial functions depending only on $|x|$ corresponds the case of functions depending only on $s=-\log |x|$.
\begin{ThmMain}{B''} Let $d\ge 1$, $a <(d-2)/2$ and $\gamma\ge1/4$. If $w \in H^1(\mathcal C)$ depends only on $s\in\R$ and is normalized by $ \icnd{w^2}= 1$, then
\be{Ineq:GLogHardyRad-w}
\icnd{|w|^2 \, \log |w|^2 }\leq 2\,\gamma\,\log\left(\mathsf C_{\rm GLH}^*\, \left[\icnd{|\nabla w|^2}+ \frac14\,(d-2-2\,a)^2\right] \right)\,.
\ee
The value of the optimal constant $\mathsf C_{\rm GLH}^*$ is given by \eqref{Cglh}. If $\gamma>\frac 14$, equality in \eqref{Ineq:GLogHardyRad-w} is achieved by the function
\[
w(s)= \frac{\tilde w(s)}{ \icnd{\tilde w^2}}\quad \hbox{\rm where}\quad \tilde w(s) = \exp \(-\frac{(d-2-2\,a)^2}{4\,(4\,\gamma-1)}s^2 \)\,.
\]
\end{ThmMain}
If $d=1$, $\mathcal C$ is equal to $\R$. For any $d\ge 1$, one may suspect that the optimal constant for $\eqref{Ineq:GLogHardy}$ (resp. \eqref{Ineq:GLogHardy-w}) is achieved in the class of radially symmetric functions (resp. functions depending only on $s\in\R$) and therefore $ \mathsf C_{\rm GLH}= \mathsf C^*_{\rm GLH}$. Using the method developed in \cite{Catrina-Wang-01,Felli-Schneider-03,DET}, it turns out that there is a range of the parameters $a$ and $\gamma$ for which this is not the case.
\begin{ThmMain}{C} Let $d \geq 2$ and $a<-1/2$. Assume that $\gamma>1/2$ if $d=2$. If, in addition,
\[
\frac{d}{4}\leq \gamma<\frac 14 + \frac{(d-2\,a-2)^2}{4\,(d-1)}\,,
\]
then the optimal constant $\mathsf C_{\rm GLH}$ in inequality $\eqref{Ineq:GLogHardy}$ is not achieved by a radial function and $\mathsf C_{\rm GLH}> \mathsf C^*_{\rm GLH}$.
\end{ThmMain}
This paper is organized as follows. In Section \ref{Sec:interpol}, we derive Theorems A, A' and A'' as a consequence of a Caffarelli-Kohn-Nirenberg interpolation inequality. In Section \ref{Sec:radial}, we present a complete study of the radial case and in particular we prove Theorems B, B' and B''. This study is based on a sharp one-dimensional interpolation inequality. In Section \ref{Sec:LogRadial}, we show that Theorem B' implies both the logarithmic Sobolev inequality \eqref{21} and the Hardy inequality \eqref{1}. In the final section, we study the symmetry breaking of the interpolation inequalities as well as of the logarithmic Hardy inequality, thus establishing Theorem~C.

\section{Interpolation inequalities. Proof of Theorems A, A' and A''}\label{Sec:interpol}

In this section, we will give the proofs of Theorems A, A' and A'' with the help of a general inequality of Caffarelli-Kohn-Nirenberg type and a differentiation procedure with respect to some of the parameters of the inequality. Our starting point is the following inequality, which has been established in \cite{Caffarelli-Kohn-Nirenberg-84}:
\be{Ineq:CKN}
\(\ird{\frac{|u|^p}{|x|^{b\,p}}}\)^\frac 2p\leq \mathsf C_{\rm CKN}(p,a) \ird{\frac{|\nabla u|^2}{|x|^{2\,a}}}\quad\forall\;u\in \mathcal D (\R^d)\;.
\ee
Restrictions on the exponents are given by the conditions: $b \in (a+1/2,a+1]$ in case $d=1$, $b \in (a,a+1]$ when $d=2$ and $b \in [a,a+1]$ when $d \ge 3$. In addition, for any $d \ge 1$, we assume that
\be{Range}
a<\frac{d-2}{2}\quad\mbox{and}\quad p=\frac{2\,d}{d-2+2\,(b-a)}\,.
\ee
See for instance \cite{Catrina-Wang-01} for a review of various known results like existence of optimal functions. In the limit case $b=a+1$, $p=2$, \eqref{Ineq:CKN} is equivalent to \eqref{Ineq:HaGen} and the optimal constant is then $\mathsf C_{\rm CKN}(2,a)=4/(d-2-2\,a)^2$.

The range $a>(d-2)/2$ can also be covered with functions in the space $\mathcal D(\R^d\setminus\{0\})$. Inequalities are not restricted to spaces of smooth functions and can be extended to the space $\mathcal D_{a,b}(\R^d)$ obtained by completion of $\mathcal D(\R^d\setminus\{0\})$ with respect to the norm defined~by
\[
\|u\|^2=\nrm{\,|x|^{-b}\,u\,}p^2+\nrm{\,|x|^{-a}\,\nabla u\,}2^2\,,
\]
but some care is required. For instance, if $a>(d-2)/2$, it turns out that, for any $u\in\mathcal D_{a,b}(\R^d)$,
\[
\lim_{r\to 0_+}r^{-d}\,\|u\|_{L^2(B(0,r))}=0\;.
\]
See \cite{DET} for more details.

A key issue for \eqref{Ineq:CKN} is to determine whether equality is achieved among radial solutions when $\mathsf C_{\rm CKN}$ is the optimal constant, or, alternatively, if symmetry breaking occurs. See \cite{Catrina-Wang-01,Felli-Schneider-03,MR2051129,DET,0902} for conditions for which the answer is known. Here are some cases for which radial symmetry holds:
\begin{enumerate}
\item[(i)] The dimension is $d=1$.
\item[(ii)] If $d\ge 3$, we assume either $a\ge 0$ or, for any $p\in(2,2^*)$, $a<0$ and $|a|$ is small enough, or for any $a<0$, $p-2>0$ is small enough.
\item[(iii)] If $d=2$, we assume either $a<0$ with $|a|$ small enough and $|a|\,p<2$, or, for any $a<0$, $p-2>0$ is small enough.
\end{enumerate}
In such cases, optimal functions are known and $\mathsf C_{\rm CKN}(p,a)$ is explicit (see Section~\ref{Sec:Radial-Interpolation}). Alternatively, it is known that for $d\ge 2$, if
\be{Cdt:FS}
a<b<1+a-\frac d2\(1-\frac{d-2-2\,a}{\sqrt{(d-2-2\,a)^2+4\,(d-1)}}\)\,,
\ee
minimizers are not radially symmetric. In such a case the explicit expression of $\mathsf C_{\rm CKN}$ is not known. More details will be given in Section~\ref{Sec:SymBreaking}.

Let $2^*=\infty$ if $d=1$ or $d=2$, $2^*=2d/(d-2)$ if $d\ge3$ and define
\[
\vartheta(p,d):=\frac{d\,(p-2)}{2\,p}\;.
\]
We have a slightly more general family of interpolation inequalities than \eqref{Ineq:CKN}, which has also been established in \cite{Caffarelli-Kohn-Nirenberg-84} and goes as follows.
\begin{thm}[According to \cite{Caffarelli-Kohn-Nirenberg-84}]\label{Thm:CKN} Let $d\ge 1$. For any $\theta\in[\vartheta(p,d),1]$, there exists a positive constant $\mathsf C(\theta,p,a)$ such that
\be{Ineq:Gen_interp}
\(\ird{\frac{|u|^p}{|x|^{b\,p}}}\)^\frac 2p\leq \mathsf C(\theta,p,a)\(\ird{\frac{|\nabla u|^2}{|x|^{2\,a}}}\)^\theta\(\ird{\frac{|u|^2}{|x|^{2\,(a+1)}}}\)^{1-\theta}\quad\forall\;u\in \mathcal D^{1,2}_{a}(\R^d)\;.
\ee
\end{thm}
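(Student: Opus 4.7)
My plan is to derive \eqref{Ineq:Gen_interp} by interpolating, via Hölder's inequality, the endpoint Caffarelli--Kohn--Nirenberg inequality \eqref{Ineq:CKN} at an auxiliary exponent $\tilde p\ge p$ against the weighted $L^2$-norm $\int_{\R^d}|u|^2|x|^{-2(a+1)}\,dx$ controlled by \eqref{Ineq:HaGen}. The value $\theta=1$ is exactly \eqref{Ineq:CKN}, while the lower endpoint $\theta=\vartheta(p,d)$ corresponds to pushing the auxiliary exponent up to the critical value $\tilde p=2^{\ast}:=2d/(d-2)$ when $d\ge 3$, so only the intermediate range of $\theta$ requires work.

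Concretely, fix $p\in(2,2^{\ast}]$ and $\theta\in[\vartheta(p,d),1]$; the degenerate case $p=2$ reduces at once to \eqref{Ineq:HaGen}. I take an auxiliary pair $(\tilde p,\tilde b)$ with $\tilde p\in[p,2^{\ast}]$ and $\tilde b:=a+1-\vartheta(\tilde p,d)$, so that $(\tilde p,a,\tilde b)$ obeys the scaling relation in \eqref{Range}. Setting $t:=(p-2)/(\tilde p-2)\in[0,1]$, the requirement that the powers of $|u|$ and of $|x|$ on both sides agree forces the pointwise factorization
\[
\frac{|u|^p}{|x|^{bp}}\;=\;\biggl(\frac{|u|^2}{|x|^{2(a+1)}}\biggr)^{\!1-t}\biggl(\frac{|u|^{\tilde p}}{|x|^{\tilde b\,\tilde p}}\biggr)^{\!t},
\]
together with the link $\theta\,\vartheta(\tilde p,d)=\vartheta(p,d)$ between the prescribed $\theta$ and the auxiliary $\tilde p$. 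In dimension $d\ge 3$, as $\tilde p$ runs from $p$ to $2^{\ast}$, the corresponding $\theta=t\,\tilde p/p$ sweeps continuously from $1$ down to $\vartheta(p,d)$, covering the full range of interest.

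Hölder's inequality applied to this factorization with conjugate exponents $1/(1-t)$ and $1/t$ then gives
\[
\int_{\R^d}\frac{|u|^p}{|x|^{bp}}\,dx\;\le\;\biggl(\int_{\R^d}\frac{|u|^2}{|x|^{2(a+1)}}\,dx\biggr)^{\!1-t}\biggl(\int_{\R^d}\frac{|u|^{\tilde p}}{|x|^{\tilde b\,\tilde p}}\,dx\biggr)^{\!t},
\]
and inserting \eqref{Ineq:CKN} at $(\tilde p,a,\tilde b)$ into the second factor, then raising to the power $2/p$, yields \eqref{Ineq:Gen_interp} with the explicit constant $\mathsf C(\theta,p,a)=\mathsf C_{\rm CKN}(\tilde p,a)^{\theta}$. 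The main technical point is handling the low-dimensional endpoint: when $d\le 2$ one has $2^{\ast}=+\infty$, and reaching $\theta=\vartheta(p,d)$ requires either a limiting argument $\tilde p\to\infty$ paired with a Moser--Trudinger type substitute for \eqref{Ineq:CKN} at the endpoint, or a reduction to the cylinder via the Emden--Fowler transformation \eqref{Transf:Emden-Fowler} followed by a standard Gagliardo--Nirenberg inequality on $\mathcal C$. Apart from this, the argument is pure bookkeeping: one checks that $\tilde b$ lies in the admissible range of \eqref{Range} for every $\tilde p\in[p,2^{\ast}]$, which is immediate from $\vartheta(\tilde p,d)\in[0,1]$ when $d\ge 3$, and that the Hölder conjugate exponents $1/(1-t)$ and $1/t$ belong to $[1,\infty]$.
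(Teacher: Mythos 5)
Your argument is essentially the paper's own elementary proof (Proposition~\ref{Thm:Extensionsb}): the same H\"older factorization of $|u|^p\,|x|^{-b\,p}$ into the weighted $L^2$ density and a scaling-consistent Caffarelli--Kohn--Nirenberg term at an auxiliary exponent, your $(\tilde p,t)$ being a reparametrization of the paper's $k$ via $t=(2-k)/2$, $\tilde p=2\,(p-k)/(2-k)$, while the paper's additional Hardy-interpolation parameter $\alpha$ only serves to produce a family of upper bounds on the constant. The one caveat is that for $d=1$ the H\"older step reaches only $\theta>1-2/p=2\,\vartheta(p,1)$ even in the limit $\tilde p\to\infty$, so the whole subinterval $[\vartheta(p,1),\,1-2/p]$ --- not just the endpoint --- must be handled by the Emden--Fowler/Gagliardo--Nirenberg reduction you mention (which is exactly what the paper does in Section~\ref{Sec:inTerpRadial}), and for $d=2$ the endpoint $\theta=\vartheta(p,2)$ is, as in the paper, deferred to \cite{Caffarelli-Kohn-Nirenberg-84}.
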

Inequality \eqref{Ineq:Gen_interp} coincides with \eqref{Ineq:CKN} if $\theta=1$. We will establish the expression of $\mathsf C(\theta,p,a)$ when minimizers are radially symmetric and extend the symetry breaking results of Felli and Schneider to the case $\theta<1$ in Sections~\ref{Sec:radial} and \ref{Sec:SymBreaking} respectively. Before, we give an elementary proof of \eqref{Ineq:Gen_interp}, whose purpose is to give a bound on $\mathsf C(\theta,p,a)$ in terms of the best constant in \eqref{Ineq:CKN}, and to justify the limiting case that is obtained by passing to the limit $\theta\to 0_+$ and $p\to 2_+$ simultaneously. 
\begin{prop}\label{Thm:Extensionsb} Let $b \in (a+1/2,a+1]$ when $d=1$, $b \in (a,a+1]$ when $d=2$ and $b \in [a,a+1]$ when $d \ge 3$. In addition, for any $d \ge 1$, we assume that \eqref{Range} holds. Then we have\\
{\rm (i)} Let $\mathfrak K:=\{k\in(0,2)\,:\,k\le d-(d-2)\,p/2\;\mbox{if}\;d\ge3\}$. For any $\theta\in[\vartheta(p,d),1]\cap(1-2/p,1]$, we have
\[
\mathsf C(\theta,p,a)\le\inf_{k\in \mathfrak K}\left[\mathsf C_{\rm CKN}\(\frac{2\,(p-k)}{2-k},a\)\right]^{1-\frac kp}\,\(\frac 2{d-2-2\,a}\)^{2\(\frac kp+\theta-1\)}
\]
{\rm (ii)} Let $d\ge 2$. Suppose that $a<(d-2)/2$, $\gamma \ge d/4$ and $\gamma>1/2$ if $d=2$. We have
\[
\mathsf C_{\rm GLH}\le\mathsf C_{\rm CKN}\(\tfrac{4\,\gamma}{2\,\gamma-1},a\)\,.
\]\end{prop}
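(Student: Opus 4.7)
The plan is to derive both bounds from the ``pure'' Caffarelli-Kohn-Nirenberg inequality \eqref{Ineq:CKN} applied at a suitable auxiliary exponent, combined in case (i) with the weighted Hardy inequality \eqref{Ineq:HaGen} and in case (ii) with Jensen's inequality for the concave function $\log$. Both arguments rest on rewriting the quantity on the left-hand side in terms of a probability measure built from the Hardy weight and then doing elementary book-keeping of exponents.

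For (i), I would split $|u|^p=|u|^{p-k}\cdot |u|^k$ and apply H\"older's inequality with conjugate exponents $2/(2-k)$ and $2/k$, distributing the weight $|x|^{-bp}$ so that the first factor becomes a Hardy-Sobolev integral with exponent
\[
p_k:=\frac{2\,(p-k)}{2-k},\qquad b_k:=\frac{b\,p-k\,(a+1)}{p-k},
\]
and the second factor is exactly the weighted Hardy integral $\ird{|u|^2/|x|^{2(a+1)}}$. A direct computation checks the CKN scaling identity $b_k-a=d/p_k-(d-2)/2$, so \eqref{Ineq:CKN} applies to the first factor; the admissibility range $b_k\in[a,a+1]$ reduces to $b_k\le a+1$ (automatic from $p\ge 2$) and $b_k\ge a$, which is exactly the constraint $k\le d-(d-2)\,p/2$ in the definition of $\mathfrak K$. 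Taking $2/p$-th powers gives
\[
\(\ird{|u|^p/|x|^{b\,p}}\)^{2/p}\le [\mathsf C_{\rm CKN}(p_k,a)]^{(p-k)/p}\,Y^{(p-k)/p}\,H^{k/p},
\]
with $Y=\ird{|\nabla u|^2/|x|^{2a}}$ and $H=\ird{|u|^2/|x|^{2(a+1)}}$. To obtain the general form \eqref{Ineq:Gen_interp} for arbitrary $\theta$ in the admissible range I would further write $H^{k/p}=H^{1-\theta}\cdot H^{k/p-(1-\theta)}$ and estimate the second factor by $Y^{k/p-(1-\theta)}$ using \eqref{Ineq:HaGen}; this requires $k/p\ge 1-\theta$, which is precisely why the hypothesis $\theta>1-2/p$ (and the restriction of $k$ to the range where the extra Hardy step is legal) appears. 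Collecting constants and taking the infimum over $k\in \mathfrak K$ yields the stated bound.

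For (ii), I would set $p:=4\gamma/(2\gamma-1)$, so that $\gamma=p/(2(p-2))$; the hypotheses $\gamma\ge d/4$ and $\gamma>1/2$ (if $d=2$) translate exactly into $p\in(2,2^*]$, so \eqref{Ineq:CKN} is available at this exponent with the critical weight $b=a+1-d(p-2)/(2p)$. After normalising $u$ by $H=1$, I would introduce the probability measure $d\mu=(|u|^2/|x|^{2(a+1)})\,dx$ and the function $f:=|u|^2\,|x|^{d-2-2a}$; a short algebraic manipulation shows
\[
\ird{|u|^p/|x|^{b\,p}}=\int_{\R^d} f^{(p-2)/2}\,d\mu .
\]
Jensen's inequality applied to $\log$ gives $\tfrac{p-2}{2}\int\log f\,d\mu\le\log\int f^{(p-2)/2}\,d\mu$, while \eqref{Ineq:CKN}, after taking logarithms, gives $\log\int f^{(p-2)/2}\,d\mu\le\tfrac{p}{2}\log(\mathsf C_{\rm CKN}(p,a)\,Y)$. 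Dividing by $(p-2)/2$ and using $p/(p-2)=2\gamma$ produces the weighted logarithmic Hardy inequality \eqref{Ineq:GLogHardy} with constant $\mathsf C_{\rm CKN}(p,a)=\mathsf C_{\rm CKN}(4\gamma/(2\gamma-1),a)$, which is the desired bound on $\mathsf C_{\rm GLH}$.

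The main obstacle I expect is the exponent book-keeping in (i): verifying that the H\"older splitting lands precisely on the CKN scaling line $b_k-a=d/p_k-(d-2)/2$, and that the admissible range of $b_k$ collapses exactly to the definition of~$\mathfrak K$. Part (ii), in contrast, is just an instance of the standard ``linearisation at $p=2$'' principle (Jensen) that converts a scale-invariant $L^p$-inequality into its logarithmic counterpart; the only delicate point is matching $p=4\gamma/(2\gamma-1)$ with the admissible range of the CKN exponent, which is where the conditions $\gamma\ge d/4$ and $\gamma>1/2$ (if $d=2$) are used.
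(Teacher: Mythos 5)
Your proposal is correct, and part (i) is essentially the paper's own argument: the same H\"older splitting of $|u|^p/|x|^{bp}$ into a Caffarelli--Kohn--Nirenberg factor at the exponent $p_k=2(p-k)/(2-k)$ and a weighted Hardy factor, followed by the interpolation $H^{k/p}=H^{1-\theta}H^{k/p-(1-\theta)}$ with the excess power of $H$ absorbed by \eqref{Ineq:HaGen}; your identification of the constraint $k\le d-(d-2)\,p/2$ with the admissibility $b_k\ge a$ (equivalently $p_k\le 2^*$) is exactly the paper's observation. In part (ii) you take a mildly but genuinely different route: the paper first writes the H\"older interpolation inequality \eqref{4.16} between the weighted $L^2$ and $L^P$ norms, notes it is an equality at $p=2$, and \emph{differentiates in $p$ at $p=2$} to produce the entropy term, whereas you apply Jensen's inequality for $\log$ directly to the probability measure $d\mu=|u|^2|x|^{-2(a+1)}\,dx$ and the function $f=|x|^{d-2-2a}|u|^2$, using the identity $\ird{|u|^p/|x|^{bp}}=\int f^{(p-2)/2}\,d\mu$. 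The two arguments are morally the same (the differentiation step is precisely the infinitesimal form of the convexity of $q\mapsto\log\int f^q\,d\mu$), but your version is more direct: it avoids the limiting procedure, works at a single fixed exponent $P=4\gamma/(2\gamma-1)$, and makes it transparent that the resulting constant is exactly $\mathsf C_{\rm CKN}(4\gamma/(2\gamma-1),a)$. The translation of $\gamma\ge d/4$ (and $\gamma>1/2$ for $d=2$) into $P\in(2,2^*]$ is handled correctly in both.
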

\begin{proof} If $d\ge 3$ and $p=2^*$, that is for $b=a$, then $\vartheta(p,d)=1=\theta$ and \eqref{Ineq:Gen_interp} is reduced to a special case of~\eqref{Ineq:CKN}. Assume that $p<2^*$. Let $u\in \mathcal D(\R^d)$. For any $k\in(0,2)$, we have:
\be{4.10}
\ird{\frac{|u|^p}{|x|^{b\,p}}}=\ird{\left(\frac{|u|}{|x|^{1+a}}\right)^{k}\left(\frac{|u|^{p-k}}{|x|^{b\,p-k\,(1+a)}}\right)}\leq\left(\ird{\frac{|u|^2}{|x|^{2\,(a+1)}}}\right)^{\frac{k}{2}}\left(\ird{ \frac{|u|^{2\,\frac{p-k}{2-k}}}{|x|^{2\,\frac{b\,p-k\,(1+a)}{2-k}}}}\right)^{\frac{2-k}{2}}.
\ee
We observe that $\eqref{Ineq:CKN}$ holds for some $a$, $b$ and $p$ if, due to the scaling invariance, these parameters are related by the relation
\[
b=a+1+d\,\(\tfrac 1p-\tfrac 12\)=a+1-\vartheta(p,1)\;.
\]
For any $k\in(0,2)$, we also have the relation $B=A+1+d\,\big(\frac 1P-\frac 12\big)$ if
\[
A=a\;,\quad P=\frac{2\,(p-k)}{2-k}\quad\mbox{and}\quad B\,P=\frac{2\,(b\,p-k\,(1+a))}{2-k}\;.
\]
Hence, using \eqref{Ineq:CKN}, we have that
\be{4.12}
\left(\ird{ \frac{|u|^{2\,\frac{p-k}{2-k}}}{|x|^{2\,\frac{b\,p-k\,(1+a)}{2-k}}}}\right)^{\frac{2-k}{p-k}}\leq \mathsf C_{\rm CKN}\(2\,\frac{p-k}{2-k},a\) \ird{ \frac{|\nabla u|^2}{|x|^{2\,a}}}
\ee
provided that $2<2\,(p-k)/(2-k)\leq 2^*$ if $d\ge 3$, which is equivalent to $k\leq d-(d-2)\,p/2$ using the fact that $k<2$. On the other hand, we may estimate the first integral in the right hand side of \eqref{4.10} by \eqref{Ineq:HaGen} and get
\be{4.14}
\ird{\frac{|u|^2}{|x|^{2\,(a+1)}}}\le \(\frac{4}{(d-2-2\,a)^2}\ird{\frac{|\nabla u|^2}{|x|^{2\,a}}}\)^{1-\alpha}\,\(\ird{\frac{|u|^2}{|x|^{2\,(a+1)}}}\)^{\alpha}
\ee
for any $\alpha \in [0,1]$. Combining \eqref{4.10}, \eqref{4.12} and \eqref{4.14} we get
\[
\(\ird{\frac{|u|^p}{|x|^{b\,p}}}\)^\frac 2p\leq \mathsf C(\theta,p,a)\(\ird{\frac{|\nabla u|^2}{|x|^{2\,a}}}\)^\theta\(\ird{\frac{|u|^2}{|x|^{2\,(a+1)}}}\)^{1-\theta}
\]
with $\theta = 1-\alpha k/p\in [\vartheta(p,d),1]$ and this proves \eqref{Ineq:Gen_interp}. Notice that for $d\ge 3$, the restriction $\theta\ge\vartheta(p,d)$ comes from the condition $k\leq d-(d-2)\,p/2$. If $d=2$, $\theta>\vartheta(p,2)=1-2/p$ is due to the restriction $k<2$. However, if $\theta=\vartheta(p,2)$, Inequality~\eqref{Ineq:Gen_interp} still holds true. For this case, we refer to \cite{Caffarelli-Kohn-Nirenberg-84}. If $d=1$, one knows that the Inequality~\eqref{Ineq:Gen_interp} holds true under the condition $\theta>1-2/p$, but the inequality still holds under the weaker condition $\theta\ge\vartheta(p,1)=1/2-1/p$. See Section~\ref{Sec:inTerpRadial} for further details in the one-dimensional case.

Let $P\in(2,2^*]$ if $d\ge 3$, $P>2$ if $d=1$ or $2$. For any $p\in [2, P)$ we choose $k =2\,\frac{P-p}{P-2}\in (0,2]$, which also satisfies $k\le d-(d-2)\,p/2$ so that $P=2\,\frac{p-k}{2-k}\leq 2^*$, if $d\ge 3$ and $k<2$. We also set $B:=a+1-d\,\(\frac 1P-\frac 12\)$, so that $B\,P=2\,\frac{b\,p-k\,(1+a)}{2-k}$. Then \eqref{4.10} can be written as
\be{4.16}
\ird{\frac{|u|^p}{|x|^{b\,p}}}\leq \left(\ird{\frac{|u|^2}{|x|^{2\,(a+1)}}}\right)^{\frac{P-p}{P-2}}\left(\ird{ \frac{|u|^{P}}{|x|^{BP}}}\right)^{\frac{p-2}{P-2}}.
\ee
Here we assume that $b=a+1+d\,\big(\tfrac 1p-\tfrac 12\big)$. If $d\ge 3$, Estimate \eqref{4.16} is valid for any $ 2 <p<P \leq 2^*$, and it is an equality for $p=2$ and any $P \in (2, 2^*]$. By differentiating \eqref{4.16} with respect to $p$ at $p=2$, we get that for any $P \in (2, 2^*]$,
\[
\ird{\frac{|u|^2}{|x|^{2\,(a+1)}}\log\(\frac{|x|^{d-2-2\,a}\,|u|^2}{\ird{\frac{|u|^2}{|x|^{2\,(a+1)}}}}\)}\leq \frac{P}{P-2}\ird{\frac{|u|^2}{|x|^{2\,(a+1)}}}\; \log\left[\frac{\(\ird{\frac{|u|^P}{|x|^{BP}}}\)^\frac{2}{P}}{\ird{\frac{|u|^2}{|x|^{2\,(a+1)}}}}\right]\,.
\]
For $d \ge 3$, let $2\,\gamma:=\frac{P}{P-2}\in [\frac{d}{2}, \infty)$. For $d=1$, $2$, let $2\,\gamma:=\frac{P}{P-2}\in (1,\infty)$. Then, for any $\gamma \ge d/4$ if $d\ge 3$ and any $\gamma>1/2$ if $d=1$, $2$ and any $a<(d-2)/2$, using once more \eqref{Ineq:CKN}, we have shown (ii).\end{proof}

\noindent {\em Proof of Theorems A, A' and A'':} The existence of $\mathsf C_{\rm GLH}$ is a straightforward consequence of Proposition~\ref{Thm:Extensionsb} if $d\ge2$. This proves Theorem A', except for the case $d=1$ which will be considered in Section~\ref{Sec:OneD}. Theorem A follows with $\gamma = d/4$, $d\ge3$ and $a=0$. In particular we get an upper bound for the optimal constant: $\mathsf C_{\rm LH}\leq \mathsf C_{\rm CKN}\(2^*, 0 \)=\mathsf S$.

By the Emden-Fowler transformation \eqref{Transf:Emden-Fowler}, the inequalities of Proposition~\ref{Thm:Extensionsb}, on $\R^d$, are transformed into equivalent ones on the cylinder $\mathcal C=\R \times \S$. More precisely \eqref{Ineq:Gen_interp} can be reformulated as
\be{Ineq:Gen_interpw}
\(\icnd{|w|^p}\)^\frac 2p\leq \mathsf C(\theta,p,a)\(\icnd{| \nabla w|^2}+\tfrac 14\,(d-2-2\,a)^2\icnd{|w|^2}\)^\theta\(\icnd{|w|^2}\)^{1-\theta}\,.
\ee
Notice that by standard arguments, the sharp constant in \eqref{Ineq:Gen_interpw} is achieved in $H^1(\mathcal C)$ when the parameters are in the range corresponding to the assumptions of Proposition \ref{Thm:Extensionsb}, provided $p>2$ and $\theta>\vartheta(p,d)$.

By \eqref{Transf:Emden-Fowler}, the logarithmic Hardy inequality \eqref{Ineq:GLogHardy} of Theorem A' takes the form \eqref{Ineq:GLogHardy-w} of Theorem A'' if $\gamma \geq d/4$, $d\ge3$ and $a<(d-2)/2$, or $\gamma >1/2$, $d=2$ and $a<0$. The one-dimensional case, for which $\mathcal C=\R$, will be directly investigated in the next section. 
\finprf

\section{The one-dimensional and the radial cases. Proof of Theorems B, B' and B''}\label{Sec:radial}

In this section we will study the Caffarelli-Kohn-Nirenberg interpolation inequality \eqref{Ineq:Gen_interpw} as well as the logarithmic Hardy inequality \eqref{Ineq:GLogHardy-w} in the one-dimensional case and under the restriction to the set of radial functions. As a consequence, we shall also establish Theorems B, B' and B''.

\subsection{The sharp interpolation inequality in the one-dimensional cylindric case}\label{Sec:inTerpRadial}

If $w$ depends only on $s=-\log |x|$, Inequality \eqref{Ineq:Gen_interpw} can be reduced to its one-dimensional version,
\be{Ineq:sigma}
\(\ir{|w|^p}\)^\frac 2p\leq \mathsf K(\theta,p,\sigma)\,\(\ir{|w'|^2}+ \sigma^2 \ir{|w|^2}\)^\theta\(\ir{|w|^2}\)^{1-\theta}\;,
\ee
for any $w\in H^1(\R)$, with $\sigma=(d-2-2\,a)/2$, provided $\mathsf C(\theta,p,a)\,|\S|^{1-2/p}=\mathsf K(\theta,p, \sigma)$. Inequality~\eqref {Ineq:sigma} is however of interest by itself and can be considered as depending on the parameters $\theta$, $p$ and $\sigma$, independently of $a$ and $d$.

If $\theta>\vartheta(p,1)$, the proof of the existence of an optimal function is standard. After optimizing the inequality under scalings, \eqref{Ineq:sigma} reduces to a Gagliardo-Nirenberg inequality, whose optimal function is defined up to a scaling and a multiplication by a constant. Let us give some details.

If we optimize Inequality~\eqref{Ineq:sigma} under scalings, we find that it is equivalent to the one-dimensional Gagliardo-Nirenberg inequality. Let
\[
Q[w]:=\(\ir{|w'|^2}+ \sigma^2 \ir{|w|^2}\)^\theta\(\ir{|w|^2}\)^{1-\theta}
\]
be the functional which appears in the right hand side of Inequality~\eqref{Ineq:sigma} and consider $w_\lambda(s)=\lambda^{1/p}\,w(\lambda\,s)$, $\lambda>0$. This scalings leaves the left hand side of Inequality~\eqref{Ineq:sigma} invariant, while
\[
Q[w_\lambda]=\(\lambda^{2-b}\ir{|w'|^2}+ \sigma^2\,\lambda^{-b}\ir{|w|^2}\)^\theta\(\ir{|w|^2}\)^{1-\theta}
\]
with $b=(p-2)/(p\,\theta)$. We observe that $a=2-b$ is positive if and only if $\theta>(p-2)/(2\,p)=\vartheta(p,1)$. Hence we find that
\begin{enumerate}
\item [(i)] if $\theta<\vartheta(p,1)$, then $\inf_{\lambda>0}Q[w_\lambda]=\lim_{\lambda\to\infty}Q[w_\lambda]=0$, and Inequality~\eqref{Ineq:sigma} does not hold.
\item [(ii)] if $\theta=\vartheta(p,1)$, then
\[
\inf_{\lambda>0}Q[w_\lambda]=\lim_{\lambda\to\infty}Q[w_\lambda]=\(\ir{|w'|^2}\)^{\vartheta(p,1)}\(\ir{|w|^2}\)^{1-{\vartheta(p,1)}}\,,
\]
so that \eqref{Ineq:sigma} is equivalent to the one-dimensional Gagliardo-Nirenberg inequality
\be{Ineq:GN1}
\|w\|_{L^p(\R)}\le\mathsf C_{\rm GN}\,\|\,w'\|_{L^2(\R)}^{\vartheta(p,1)}\,\|\,w\|_{L^2(\R)}^{1-{\vartheta(p,1)}}\quad\forall\;w\in H^1(\R)\;.
\ee
Hence $\mathsf K(\vartheta(p,1),p,\sigma)=\mathsf C_{\rm GN}^2$ is independent of $\sigma>0$, and Inequality~\eqref{Ineq:sigma} admits no optimal function if $\theta=\vartheta(p,1)$, $\sigma>0$. It degenerates into \eqref{Ineq:GN1} in the limit $\sigma\to0_+$, for which an optimal function exists.
\item [(iii)] if $\theta>\vartheta(p,1)$, then $\inf_{\lambda>0}Q[w_\lambda]$ is achieved for 
\[
\lambda^2=\frac ba\,\frac{\sigma^2\ir{|w|^2}}{\ir{|w'|^2}}\;,
\]
that is
\[
\inf_{\lambda>0}Q[w_\lambda]=\kappa\,\(\ir{|w'|^2}\)^{\vartheta(p,1)}\(\ir{|w|^2}\)^{1-{\vartheta(p,1)}}
\]
with $\kappa=\big(\frac a{b\,\sigma^2}\big)^\frac{p-2}{2\,p}\,\big(\frac{a+b}a\,\sigma^2\big)^\theta$, \emph{i.e.}
\[
\frac 1\kappa= \left[\frac{(p-2)\,\sigma^2}{2+(2\theta-1)\,p}\right]^\frac{p-2}{2\,p} \left[\frac{2+(2\theta-1)\,p}{2\,p\,\theta\,\sigma^2}\right]^\theta\,.
\]
As a consequence, $\mathsf K(\theta,p,\sigma)=\kappa^{-1}\,\mathsf C_{\rm GN}^2$ and optimality is achieved in Inequality~\eqref{Ineq:sigma}, since \eqref{Ineq:GN1} admits an optimal function; see for instance \cite{MR2427077,MR2317190}.
\end{enumerate}

The above Gagliardo-Nirenberg inequality \eqref{Ineq:GN1} is equivalent to the Sobolev inequality corresponding to the embedding $H^1_0(0,1)\hookrightarrow L^q(0,1)$ for some $q=q(p)>2$; see \cite{Benguria-Loss03,MR1923362,MR2048612} for more details. Also notice that, using the radial symmetry of the minimizers of the optimal functions of the Hardy-Sobolev inequality~\eqref{Ineq:HS}, we recover the expression of $\mathsf C_{\rm HS}(p)=|\S|^{-(p-2)/p}\,\mathsf K(1,p,(d-2)/2)$ that can be found~in~\cite{MR1223899,MR1731336,Catrina-Wang-01,0902}, using $|\S|=2\,\pi^{d/2}/\,\Gamma\(d/2\)$, $\sqrt\pi\,\Gamma(d)=2^{d-1}\,\Gamma\(d/2\)\,\Gamma\((d+1)/2\)$ and Lemma~\ref{Lem:interpRadial} below. Similarly, in all cases for which optimal functions are known to be radially symmetric in Inequality~\eqref{Ineq:CKN} (see Section~\ref {Sec:interpol}), we have $\mathsf C_{\rm CKN}(p,a)=|\S|^{-(p-2)/p}\,\mathsf K(1,p,(d-2-2\,a)/2)$. The constant $\mathsf K(\theta,p, \sigma)$ can be computed as follows.

\begin{lem}\label{Lem:interpRadial} Let $\sigma >0$, $p>2$ and $\theta \in \left[\vartheta(p,1), 1\right]$. Then the best constant in Inequality \eqref{Ineq:sigma} is given by:
\be{EqnK}
\mathsf K(\theta,p, \sigma)=\left[\frac{(p-2)^2\,\sigma^2}{2+(2\theta-1)\,p}\right]^\frac{p-2}{2\,p} \left[\frac{2+(2\theta-1)\,p}{2\,p\,\theta\,\sigma^2}\right]^\theta \left[\frac 4{p+2}\right]^\frac{6-p}{2\,p}\left[\frac{\Gamma\left(\frac{2}{p-2}+\frac 12\right)}{\sqrt\pi\;\Gamma\left(\frac{2}{p-2}\right)}\right]^\frac{p-2}{p}\,.
\ee
If $\theta>\vartheta(p,1)$, the best constant is achieved by an optimal function $\overline{w}(s)$, which is unique up to multiplication by constants and shifts and is given by
\[
\overline{w}(s)=\big(\cosh(\lambda\,s)\big)^{-\frac2{p-2}}\quad\mbox{with}\quad{\textstyle\lambda =\frac 12\,(p-2)\,\sigma\left[\frac{p+2}{2+(2\theta-1)\,p}\right]^{\frac12}}\,.
\]
\end{lem}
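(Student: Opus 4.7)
The plan is to exhibit the extremal of \eqref{Ineq:sigma} directly, compute the three moments that appear in its functional, and then read off both the best constant and the formula for $\lambda$. By symmetric decreasing rearrangement (which leaves $\ir{|w|^p}$ and $\ir{|w|^2}$ invariant while decreasing $\ir{|w'|^2}$), together with the existence result for $\theta>\vartheta(p,1)$ already recorded in case (iii) above, an optimizer $\overline w$ exists and may be taken positive, even, and monotone decreasing on $(0,\infty)$. Writing $A:=\ir{|w'|^2}+\sigma^2\ir{|w|^2}$, $B:=\ir{|w|^2}$, and taking a logarithmic variation of $A^\theta B^{1-\theta}/\|w\|_p^2$ at $\overline w$ yields the Euler--Lagrange equation
\[
-\overline w''+\sigma_{\rm eff}^2\,\overline w=\mu\,\overline w^{p-1},\qquad \sigma_{\rm eff}^2:=\sigma^2+\frac{1-\theta}{\theta}\,\frac{A}{B},\qquad \mu:=\frac{A}{\theta\,\|\overline w\|_p^p}\,.
\]

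The ansatz $w_\lambda(s):=(\cosh(\lambda s))^{-2/(p-2)}$ provides an explicit solution: a short computation using $\tanh^2=1-\mathrm{sech}^2$ gives
\[
-w_\lambda''+\frac{4\lambda^2}{(p-2)^2}\,w_\lambda=\frac{2\,p\,\lambda^2}{(p-2)^2}\,w_\lambda^{p-1}\,,
\]
so $w_\lambda$ satisfies the Euler--Lagrange equation precisely when $\sigma_{\rm eff}^2=4\lambda^2/(p-2)^2=:\lambda^2\beta^2$, where $\beta:=2/(p-2)$. From $(w_\lambda')^2=\lambda^2\beta^2\,w_\lambda^2(1-w_\lambda^{p-2})$ and the identity $\int_0^\infty(\cosh\xi)^{-\alpha}\,d\xi=\frac{\sqrt\pi\,\Gamma(\alpha/2)}{2\,\Gamma((\alpha+1)/2)}$ applied with $\alpha=2\beta$ and $\alpha=p\beta=2\beta+2$, one obtains
\[
\|w_\lambda\|_p^p=\tfrac{2\beta}{2\beta+1}\,\|w_\lambda\|_2^2,\qquad \|w_\lambda'\|_2^2=\tfrac{\lambda^2\beta^2}{2\beta+1}\,\|w_\lambda\|_2^2,\qquad \|w_\lambda\|_2^2=\tfrac{\sqrt\pi}{\lambda}\,\tfrac{\Gamma(\beta)}{\Gamma(\beta+\frac12)}\,.
\]
Substituting the first two into the matching condition $\sigma_{\rm eff}^2=\lambda^2\beta^2$ gives a linear equation in $\lambda^2$ whose unique positive solution is exactly $\lambda=\tfrac12(p-2)\,\sigma\,\bigl[(p+2)/(2+(2\theta-1)p)\bigr]^{1/2}$.

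With $\overline w=w_\lambda$ fixed, the ratios simplify via $A/B=2\,p\,\sigma^2\,\theta/(2+(2\theta-1)p)$ (which follows from $\sigma_{\rm eff}^2=\lambda^2\beta^2$) and $\|\overline w\|_p^2/B^{2/p}=(4/(p+2))^{2/p}$, so the best constant reduces to
\[
\mathsf K(\theta,p,\sigma)=\left(\frac{4}{p+2}\right)^{2/p}\left(\frac{2+(2\theta-1)p}{2\,p\,\theta\,\sigma^2}\right)^{\theta}B^{-(p-2)/p}\,.
\]
Inserting $B^{-(p-2)/p}=\lambda^{(p-2)/p}\,[\Gamma(\beta+\frac12)/(\sqrt\pi\,\Gamma(\beta))]^{(p-2)/p}$ together with the explicit value of $\lambda$ regroups the factors into exactly the four brackets of \eqref{EqnK}.

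Uniqueness up to shifts and positive multiplicative constants is immediate from the invariance of \eqref{Ineq:sigma} under these operations combined with the uniqueness of the positive, even, decaying solution of $-w''+w=w^{p-1}$, obtained by integrating the first integral $(w')^2=w^2-\tfrac{2}{p}\,w^p$. The main difficulty will be purely computational: tracking how the scaling parameter $\lambda$, the Gamma functions and the algebraic factors in $\sigma,\theta$ recombine into the precise grouping of \eqref{EqnK} demands careful bookkeeping, but introduces no new ideas.
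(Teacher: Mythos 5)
Your proposal is correct and follows essentially the same route as the paper: identify the extremal as $(\cosh(\lambda s))^{-2/(p-2)}$ through the Euler--Lagrange ODE and its first integral, then compute $I_2$, $I_p$, $J_2$ with the same $\int_\R(\cosh s)^{-q}\,ds$ formula; the only organizational difference is that you pin down $\lambda$ by matching the coefficients of the Euler--Lagrange equation rather than by explicitly maximizing the quotient over the scaling family $w(\lambda\,\cdot)$, which amounts to the same computation. The one point you leave out is the endpoint $\theta=\vartheta(p,1)$, which is included in the statement but where no extremal exists, so your argument does not apply as written; there the formula \eqref{EqnK} must be recovered as a limit (the powers of $2+(2\theta-1)\,p$ cancel since $\theta=\tfrac{p-2}{2p}$) or by identifying $\mathsf K$ with $\mathsf C_{\rm GN}^2$, as the paper does by setting $h(p,\theta,\sigma)=1$.
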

\begin{proof} Using the Emden-Fowler transformation, the value of $\mathsf K(\theta,p,\sigma)$ can be computed using the equation
\be{Eqn:ODE}
(p-2)^2\,w''-4\,w+2\,p\,|w|^{p-2}\,w=0
\ee
such that $w'(0)=0$ and $\lim_{|s|\to\infty}w(s)=0$. A minimizer for \eqref{Ineq:sigma} is indeed defined up to a translation (a scaling in the original variables) and a multiplication by a constant, which can be adjusted to fix one of the coefficients in the Euler-Lagrange equation as desired. An optimal function can therefore be written as $w(\lambda\,s)$ for some $\lambda>0$, on which we can optimize. The solution $w$ of \eqref{Eqn:ODE} is unique if we further assume that it is positive with a maximum at $s=0$. This can be seen as follows. Multiply \eqref{Eqn:ODE} by $w$ and integrate from $s$ to $+\infty$. Since $\lim_{|s|\to\infty}w'(s)=0$, the function $s\mapsto\frac 12\,(p-2)^2\,w'(s)^2-2\,w(s)^2+2\,w(s)^p$ is constant and therefore equal to $0$. This determines $w(0)=1$, so that the solution is unique and given by
\[
w(s)=(\cosh s)^{-\frac2{p-2}}\quad\forall\;s\in\R\;.
\]
Hence
\[
\mathsf K(\theta,p,\sigma)=\max_{\lambda>0}\frac{\(\lambda^{-1}\,I_p\)^\frac 2p}{\(\lambda\,J_2+ \sigma^2 \lambda^{-1}\,I_2\)^\theta\(\lambda^{-1}\,I_2\)^{1-\theta}}
\]
where
\[
I_q:=\int_{\R}|w(s)|^q\;ds\quad\mbox{and}\quad J_2:=\int_{\R}|w'(s)|^2\;ds\;.
\]
With $\lambda=\mu^\theta$, let $g(\mu):=\mu^{\frac 2p+2\theta-1}\,\frac{J_2}{I_2}+\sigma^2 \,\mu^{\frac 2p-1}$ and observe that
\[
\mathsf K(\theta,p,\sigma)=\frac{I_p^\frac 2p}{\big(\min_{\mu>0}g(\mu)\big)^\theta\,I_2}\;.
\]
If $\theta>\vartheta(p,1)$, the minimum of $g(\mu)$ is achieved at $\lambda^2=\mu^{2\theta}=\frac{p-2}{2+(2\theta-1)\,p}\,\sigma^2\,\frac{I_2}{J_2}$ and
\[
\big(\min_{\mu>0}g(\mu)\big)^\theta=h(p,\theta, \sigma)\,\(\frac{J_2}{I_2}\)^{\frac 12-\frac 1p}\quad\mbox{where}\quad h(p,\theta, \sigma):=\left[\frac{2+(2\theta-1)\,p}{\,(p-2)\,\sigma^2}\right]^\frac{p-2}{2\,p} \left[\frac{2\, p\,\theta\,\sigma^2}{2+(2\theta-1)\,p}\right]^\theta\,.
\]
If $\theta=\vartheta(p,1)$, then $\inf_{\mu>0}g(\mu)=\frac{J_2}{I_2}$ and we set $h(p,\theta,\sigma):=1$. For any $\theta \in \left[\vartheta(p,1), 1\right]$, we thus obtain
\[
\mathsf K(\theta,p,\sigma)=\frac{I_p^\frac 2p}{h(p,\theta,\sigma)\,J_2^{\frac 12-\frac 1p}\,I_2^{\frac 12+\frac 1p}}\;.
\]
Using the formula
\[
\int_{\R}\frac{ds}{(\cosh s)^q}=\frac{\sqrt\pi\;\Gamma\(\frac q2\)}{\Gamma\(\frac{q+1}2\)}=:f(q)\;,
\]
we can compute
\[
I_2=f\(\frac 4{p-2}\)\;,\quad I_p=f\(\frac{2\,p}{p-2}\)=f\(\frac 4{p-2}+2\)\;,
\]
and get
\[
I_2=\frac{\sqrt{\pi }\;\Gamma\left(\frac{2}{p-2}\right)}{\Gamma \left(\frac{p+2}{2\,(p-2)}\right)}\;,\quad I_p=\frac{4\,I_2}{p+2}\;,\quad J_2:=\frac 4{(p-2)^2}\(I_2-I_p\)=\frac{4\,I_2}{(p+2)(p-2)}\;.
\]
Hence
\[
\mathsf K(\theta,p, \sigma)=\left[\frac{(p-2)^2\,\sigma^2}{2+(2\theta-1)\,p}\right]^\frac{p-2}{2\,p}
\left[\frac{2+(2\theta-1)\,p}{2p\,\theta\,\sigma^2}\right]^\theta \left[\frac 4{p+2}\right]^\frac{6-p}{2\,p}\left[\frac{1}{I_2}\right]^\frac{p-2}{p}\;,
\]
which proves~\eqref{EqnK}.
\end{proof}

\subsection{The sharp Logarithmic Hardy inequality in the one-dimensional cylindric case}\label{Sec:LogHard1d}

With $2\,\gamma=p/(p-2)$ and $\theta=\gamma\,(p-2)$, we observe that the condition $\theta\in [\vartheta(p,1),1]$ is equivalent to $\gamma \in [1/(2\,p),1/(p-2)]$ and that $2+(2\theta-1)\,p=(2\,\gamma\,p-1)(p-2)$ is positive for any $p>2$ since $\gamma>1/(2\,p)$. Substituting $\theta$ with $\gamma\,(p-2)$ in the expression of $\mathsf K(\theta,p, \sigma)$ given by \eqref {EqnK} and taking the logarithm, we get
\begin{multline*}
\log\mathsf K(\gamma\,(p-2),p, \sigma)=\frac{p-2}{2\,p}\,\log\left[\frac{2 \sigma^2}{2\,\gamma\,p-1}\right] +\gamma\,(p-2)\,\log\left[\frac{2\,\gamma\,p-1}{2 \,p\,\gamma\,\sigma^2}\right]\\
+\frac{6-p}{2\,p}\,\log\left[\frac 4{p+2}\right] +\frac{p-2}{p}\,\log\left[\frac{\Gamma\left(\frac{2}{p-2}+\frac 12\right)}{\sqrt\pi\,\sqrt{\frac{2}{p-2}}\;\Gamma\left(\frac{2}{p-2}\right)}\right].
\end{multline*}
Using Stirling's formula, it is easy to see that $\lim_{t\to\infty}\frac{\Gamma\(t+\frac 12\)}{\sqrt t\,\Gamma(t)}=1$, so that $\lim_{p\to2}\mathsf K(\gamma\,(p-2),p, \sigma)=1$. Hence, for any $\gamma>1/(2\,p)$, let
\[
\mathcal K(\gamma, \sigma):=-2\,\frac d{dp}\Big[\mathsf K(\gamma\,(p-2),p, \sigma)\Big]_{|p=2}
\]
and consider the limit as $p\to 2$ in Inequality \eqref{Ineq:sigma}. We observe that $1/4>1/(2\,p)$ for any $p>2$ so that $\gamma>1/4$ guarantees $\gamma>1/(2\,p)$ uniformly in the limit $p\to 2_+$. The case $\gamma=1/4$ is achieved as a limit case.
\begin{lem}\label{Lem:LogHardy2} Let $\sigma>0$ and $\gamma\ge1/4$. Then for any $w\in H^1(\R)$ the following inequality holds true
\be{Eqn:LogSobR}
\ir{|w|^2\log\(\frac{|w|^2}{\ir{|w|^2}}\)}+\mathcal K(\gamma, \sigma)\ir{|w|^2}\leq2\, \gamma\ir{|w|^2}\;\log\left[\frac{\ir{|w'|^2}}{\ir{|w|^2}}+\sigma^2 \right]\,,
\ee
with sharp constant given by
\be{Kgamma}
\mathcal K(\gamma, \sigma)=2\,\gamma\,\log\gamma-\frac{4\,\gamma-1}2\,\log\(\frac{4\,\gamma-1}{4\,\sigma^2}\) +\frac 12\,\log\(2\,\pi\,e\)
\ee
if $\gamma>1/4$, and equality holds in \eqref{Eqn:LogSobR} for $w(s)=\exp\big(-\tfrac{\sigma^2}{4\,\gamma-1}s^2\big)$. If $\gamma=1/4$, then $\mathcal K(\gamma, \sigma)=2\,\gamma\,\log\gamma+\frac 12\,\log\(2\,\pi\,e\)$. 
\end{lem}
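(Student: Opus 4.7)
I would derive \eqref{Eqn:LogSobR} by differentiating the sharp one-dimensional interpolation inequality~\eqref{Ineq:sigma} with respect to $p$ at $p=2$, along the one-parameter family $\theta=\gamma(p-2)$. Since $\gamma\ge 1/4>1/(2\,p)$ for $p$ in a right-neighborhood of $2$, this pair $(\theta,p)$ satisfies the hypothesis $\theta\in[\vartheta(p,1),1]$ of Lemma~\ref{Lem:interpRadial}, so the explicit value~\eqref{EqnK} of the sharp constant $\mathsf K(\gamma(p-2),p,\sigma)$ is available. Taking logarithms in \eqref{Ineq:sigma} and subtracting $\log\ir{|w|^2}$ from each side yields
\[
\frac 2p\,\log\frac{\ir{|w|^p}}{\(\ir{|w|^2}\)^{p/2}}\;\le\;\log\mathsf K\bigl(\gamma(p-2),p,\sigma\bigr)+\gamma\,(p-2)\,\log\left[\frac{\ir{|w'|^2}}{\ir{|w|^2}}+\sigma^2\right]\,.
\]
At $p=2$ both sides vanish (using $\mathsf K(0,2,\sigma)=1$), so the right $p$-derivative at $p=2$ preserves the inequality. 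The derivative of the left-hand side works out to $\frac 1{2\,\ir{|w|^2}}\ir{|w|^2\log(|w|^2/\ir{|w|^2})}$, while that of the right-hand side equals $-\frac12\mathcal K(\gamma,\sigma)+\gamma\log\bigl[\ir{|w'|^2}/\ir{|w|^2}+\sigma^2\bigr]$, with $\mathcal K(\gamma,\sigma)$ as defined in the paragraph preceding the lemma statement. Multiplying through by $2\,\ir{|w|^2}$ yields exactly~\eqref{Eqn:LogSobR}.

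\textbf{Computation of $\mathcal K$.} To identify \eqref{Kgamma}, I would differentiate the expression for $\log\mathsf K(\gamma(p-2),p,\sigma)$ displayed at the beginning of Section~\ref{Sec:LogHard1d}, which splits as a sum $T_1+T_2+T_3+T_4$ of four terms. Each $T_i$ vanishes at $p=2$, either because of a vanishing prefactor of the form $(p-2)^k/p$ or because the argument of its logarithm equals $1$; hence $T_i'(2)$ reduces to the derivative of the prefactor times the value of the logarithm at $p=2$. For $T_4$, whose argument tends to $1/\sqrt\pi$ as $p\to 2$, one invokes the Stirling asymptotic $\Gamma(t+\tfrac12)/(\sqrt t\,\Gamma(t))\to 1$ already used in the text preceding the lemma. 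Collecting the four contributions $\frac14\log\frac{2\sigma^2}{4\gamma-1}$, $\gamma\log\frac{4\gamma-1}{4\gamma\sigma^2}$, $-\frac14$, and $-\frac14\log\pi$, multiplying by $-2$, and regrouping gives~\eqref{Kgamma}. The case $\gamma=1/4$ then follows by the right limit $\gamma\to 1/4^+$ in that formula, using $(4\gamma-1)\log(4\gamma-1)\to 0$.

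\textbf{Equality case.} The extremal is recovered as the $p\to 2^+$ limit of the optimizer $\overline w(s)=\cosh(\lambda s)^{-2/(p-2)}$ from Lemma~\ref{Lem:interpRadial}: substituting $\theta=\gamma(p-2)$ gives $2+(2\theta-1)p=(p-2)(2\gamma p-1)$, whence $\lambda^2=\frac14(p-2)\sigma^2(p+2)/(2\gamma p-1)\to 0$ and $\cosh(\lambda s)^{-2/(p-2)}\to\exp\bigl(-\frac{\sigma^2}{4\gamma-1}s^2\bigr)$ pointwise for $\gamma>1/4$. I would then verify the equality case directly by plugging this Gaussian into~\eqref{Eqn:LogSobR} and evaluating the resulting standard Gaussian integrals with $\alpha=\sigma^2/(4\gamma-1)$.

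\textbf{Main obstacle.} The principal technical point is justifying the passage to the right $p$-derivative at $p=2$ for arbitrary $w\in H^1(\R)$. I would first establish \eqref{Eqn:LogSobR} for $w\in C_c^\infty(\R)$, where dominated convergence shows that $p\mapsto\ir{|w|^p}$ is $C^1$ on $[2,2+\varepsilon)$ with derivative $\ir{|w|^p\log|w|}$, and where \eqref{EqnK} is smooth on $(2,2+\varepsilon)$ with well-controlled $p\to 2^+$ behavior by the Stirling expansion above. Extension to all of $H^1(\R)$ then follows by approximation, using Fatou-type lower semicontinuity of the logarithmic entropy $\ir{|w|^2\log|w|^2}$ to pass to the limit on the left-hand side.
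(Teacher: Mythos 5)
Your proposal is correct and follows essentially the same route as the paper: the authors also obtain \eqref{Eqn:LogSobR} by taking the logarithm of \eqref{Ineq:sigma} along $\theta=\gamma\,(p-2)$ and differentiating at $p=2$, with $\mathcal K(\gamma,\sigma)$ read off from the expansion of $\log\mathsf K(\gamma\,(p-2),p,\sigma)$ and the equality case checked directly on the Gaussian. Your version simply spells out details the paper leaves implicit (the four-term differentiation, the limit of the $\cosh$ optimizers, and the justification of the endpoint derivative), all of which check out.
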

\begin{proof}The result follows by taking the logarithm of \eqref{Ineq:sigma} and differentiating at $p=2$ for $\gamma> 1/4$. The equality case in \eqref{Eqn:LogSobR} can be checked by a direct computation. \end{proof}

\subsection{The sharp inequalities for radial functions}\label{Sec:Radial-Interpolation}

Let $d\ge 2$ and consider the interpolation inequality~\eqref{Ineq:Gen_interp} restricted to the subset $\mathcal D_a^*(\R^d)$ of radial functions in $\mathcal D_{a}(\R^d)$, \emph{i.e.}
\[
\(\ird{\frac{|u|^p}{|x|^{b\,p}}}\)^\frac 2p\leq \mathsf C^*(\theta,p,a)\(\ird{\frac{|\nabla u|^2}{|x|^{2\,a}}}\)^\theta\(\ird{\frac{|u|^2}{|x|^{2\,(a+1)}}}\)^{1-\theta}\quad\forall\;u\in \mathcal D_a^*(\R^d)
\]
where $\mathsf C^*(\theta,p,a)$ denotes the best constant. Let $\sigma=(d-2-2\,a)/2$. By the Emden-Fowler change of coordinates \eqref{Transf:Emden-Fowler}, the above inequality is equivalent to
\be{Ineq:GN}
\(\icnd{|w|^p}\)^\frac 2p\leq \mathsf C^*(\theta,p,a)\(\icnd{| \nabla w|^2}+\sigma^2\icnd{|w|^2}\)^\theta\(\icnd{|w|^2}\)^{1-\theta}
\ee
for all functions $w\in H^1(\mathcal C)$ depending only on $s=-\log|x|$. Up to a normalization factor depending on $|\S|=2\,\pi^{d/2}/\,\Gamma\(d/2\)$, \eqref{Ineq:GN} is equivalent to the one-dimensional inequality \eqref{Ineq:sigma} with best constant $\mathsf K\(\theta,p, \sigma\)$. It is straightforward to check that $\mathsf C^*(\theta,p,a) = |\S|^{-(p-2)/p}\,\, \mathsf K\(\theta,p,\sigma\)$. We also note that the range of $\theta$ is as in Lemma~\ref{Lem:interpRadial}, that is, $\theta \in [\vartheta(p,1), \, 1]$.

\medskip Similarly, Inequality \eqref{Eqn:LogSobR} is equivalent to \eqref{Ineq:GLogHardyRad-w} with $\left[\mathcal K(\gamma, \sigma) + \log |\S| \right] = -2\,\gamma\,\log\mathsf C_{\rm GLH}^*$. This proves Theorem B''. Theorem B' follows by the Emden-Fowler change of coordinates \eqref{Transf:Emden-Fowler}. Theorem B corresponds to the special case $a=0$, $d\ge 3$. Notice that, with $\sigma=(d-2-2\,a)/2$, Inequality \eqref{Ineq:GLogHardyRad} written in terms of a function $f$ on $\R^+$ such that $u(x)=f(|x|)$ takes the form
\[
\int_0^\infty{r^{d-3-2\,a}\,|f|^2\log\(r^{d-2-2\,a}\,|f|^2\)\;dr}+\mathcal K\(\gamma,\sigma\) \leq 2\,\gamma\;\log\left[\int_0^\infty r^{d-1-2\,a}\,|f'|^2\;dr\right],
\]
under the normalization condition $\int_0^\infty r^{d-3-2\,a}\,|f|^2\,dr=1$.

\subsection{The sharp interpolation inequality in the case of the one-dimensional real line}\label{Sec:OneD}

Recall that inequalities written on the euclidean space $\R^d$ are equivalent to one-dimensional inequalities on $\mathcal C$ by the Emden-Fowler transformation~\eqref{Transf:Emden-Fowler} only in case of radial functions and, for $d=1$, only for \emph{even} functions. However, in this case, we may notice that the restriction $a<(d-2)/2=-1/2$ means that the weight $|x|^{-2a}$ corresponds to a positive power, so that we may consider the problem on $\R^+$ and $\R^-$ as two independent problems when dealing with a smooth function $u$ such that $u'(0)=0$.

Using $X_+\,\log X_++X_-\,\log X_-\le(X_++X_-)\,\log(X_++X_-)$ with $X_\pm=\int_{\R^\pm}\frac{|u|^2}{|x|^{2\,(a+1)}}\,dx$, we get
\begin{multline*}
\int_{\R}\frac{|u|^2}{|x|^{2\,(a+1)}}\,\log \(\frac{|x|^{d-2-2\,a}\,|u|^2}{\int_{\R}\frac{|u|^2}{|x|^{2\,(a+1)}}\,dx}\)\,dx=\int_{\R}\frac{|u|^2}{|x|^{2\,(a+1)}}\,\log \(|x|^{d-2-2\,a}\,|u|^2\)\,dx-(X_++X_-)\,\log(X_++X_-)\\
\le\int_{\R^-}\frac{|u|^2}{|x|^{2\,(a+1)}}\,\log \(|x|^{d-2-2\,a}\,|u|^2\)\,dx-X_-\,\log X_-+\int_{\R^+}\frac{|u|^2}{|x|^{2\,(a+1)}}\,\log \(|x|^{d-2-2\,a}\,|u|^2\)\,dx-X_+\,\log X_+\\
=\int_{\R^-}\frac{|u|^2}{|x|^{2\,(a+1)}}\,\log \(\frac{|x|^{d-2-2\,a}\,|u|^2}{\int_{\R^-}\frac{|u|^2}{|x|^{2\,(a+1)}}\,dx}\)\,dx+\int_{\R^+}\frac{|u|^2}{|x|^{2\,(a+1)}}\,\log \(\frac{|x|^{d-2-2\,a}\,|u|^2}{\int_{\R^+}\frac{|u|^2}{|x|^{2\,(a+1)}}\,dx}\)\,dx
\end{multline*}
By the Emden-Fowler transformation, we know that
\[
\int_{\R^\pm}\frac{|u|^2}{|x|^{2\,(a+1)}}\,\log \(\frac{|x|^{d-2-2\,a}\,|u|^2}{\int_{\R^\pm}\frac{|u|^2}{|x|^{2\,(a+1)}}\,dx}\)\,dx\le 2\,\gamma\,X_\pm\log\left[\mathsf C_{\rm GLH}\,\frac{Y_\pm}{X_\pm}\right]
\]
with $Y_\pm=\int_{\R^\pm}\frac{|\nabla u|^2}{|x|^{2\,a}}\,dx$. Using $X_-\log\left[\frac{Y_-}{X_-}\right]+X_+\log\left[\frac{Y_+}{X_+}\right]\le(X_++X_-)\log\left[\frac{Y_++Y_-}{X_++X_-}\right]$, we end up with the inequality
\[
\int_{\R}\frac{|u|^2}{|x|^{2\,(a+1)}}\,\log \(\frac{|x|^{d-2-2\,a}\,|u|^2}{\int_{\R}\frac{|u|^2}{|x|^{2\,(a+1)}}\,dx}\)\,dx\le 2\,\gamma\, {\int_{\R}\frac{|u|^2}{|x|^{2\,(a+1)}}\,dx}\,\log\left[\mathsf C_{\rm GLH}\,\frac{\int_{\R}\frac{|\nabla u|^2}{|x|^{2\,a}}\,dx} {\int_{\R}\frac{|u|^2}{|x|^{2\,(a+1)}}\,dx}\right]\,,
\]
which completes the proof of Theorem A' in the one-dimensional case.

\section{Connection with logarithmic Sobolev and Hardy inequalities}\label{Sec:LogRadial}

In this section, we study the connection of the \emph{Logarithmic Hardy inequality} \eqref{Ineq:LogHardy} and its generalized form \eqref{Ineq:HaGen} with the \emph{Euclidean Logarithmic Sobolev inequality} (see below), the \emph{Hardy inequality} \eqref{1} and its generalized form \eqref{Ineq:HS}, and the \emph{Logarithmic Sobolev inequality on $\mathcal C$} (see below).

As we have seen in the previous section, the weighted logarithmic Hardy inequality of Theorem B' (radial case) is equivalent to the one-dimensional inequality \eqref{Eqn:LogSobR} with sharp constant given by \eqref{Kgamma}. With the choice $2\sigma = 4\,\gamma-1$, we observe that $\lim_{\gamma\to 1/4}\mathcal K(\gamma,4\,\gamma-1)=\tfrac 12\,\log \(\frac{\pi\,e }{2}\)$ and recover the one-dimensional logarithmic Sobolev inequality written in the scale invariant form (see \cite{MR479373}) with optimal constant, namely
\be{Ineq:ls}
\ir{|w|^2\log\(\frac{|w|^2}{\ir{|w|^2}}\)}\leq\frac 12\ir{|w|^2}\;\log\left[\frac 2{\pi\,e}\,\frac{\ir{|w'|^2}}{\ir{|w|^2}}\right]\,.
\ee

Actually, Inequality~\eqref{Eqn:LogSobR} can be written in a simpler form in terms of a function $v\in H^1(\R)$, such that $w(s)=v\((d-2-2\,a)\,s/\sqrt{4\,\gamma-1}\)$ as follows. For any $\gamma\ge 1/4$ and any $v\in H^1(\R)$,
\[
\ir{|v|^2\log\(\frac{|v|^2}{\ir{|v|^2}}\)}+\left[2\,\gamma\,\log\gamma+\frac 12\,\log(2\,\pi\,e)\right]\ir{|v|^2}\leq2\,\gamma\!\ir{|v|^2}\;\log\left[\frac{\ir{|v'|^2}}{\ir{|v|^2}}+\gamma-\tfrac 14\right]\,.
\]
Consider the $\gamma$-dependent terms, \emph{i.e.}
\[
2\,\gamma\!\ir{|v|^2}\;\log\left[\frac{\ir{|v'|^2}}{\ir{|v|^2}}+\gamma-\tfrac 14\right]-2\,\gamma\,\log\gamma\,\ir{|v|^2}=\left[2\,\ir{|v'|^2}-\frac 12\ir{|v|^2}\right]\,f(t)\;,\]
with $f(t):=\tfrac 1t\,\log(t+1)$ and $t=\tfrac 1{4\,\gamma}\,\left[4\,\frac{\ir{|v'|^2}}{\ir{|v|^2}}-1\right]$. An elementary analysis shows that $f$ is decreasing so that, in terms of $\gamma$, the minimum of the right hand side is always achieved at $\gamma=1/4$. In this case we recover the one-dimensional logarithmic Sobolev inequality written in the scale invariant form~\eqref{Ineq:ls}. On the other hand, if we send $\gamma$ to $\infty$ which implies that $t \to 0$ and $f(t) \to 1$, we recover the logarithmic Sobolev inequality in the standard euclidean form (see~\cite{Gross75}):
\be {Ineq:ls-Gross}
\ir{|v|^2\log\(\frac{|v|^2}{\ir{|v|^2}}\)}+\frac 12\ir{|v|^2}\;\log\left[2\pi\,e^2\right]\le2\ir{|v'|^2}\,.
\ee

We can also recover Hardy's inequality from \eqref{Eqn:LogSobR} by taking the limit $\gamma\to+\infty$ and observing that $\lim_{\gamma\to+\infty}\mathcal K(\gamma,\sigma)/(2\,\gamma)=2\,\log\sigma$. The radial function $u\in\mathcal D^{1,2}_{a}(\R^d)$ given in terms of $w$ by the inverse of the Emden-Fowler change of coordinates \eqref{Transf:Emden-Fowler} satisfies
\be{Ineq:HardyGen}
\tfrac 14\,(d-2-2\,a)^2\ird{\frac{|u|^2}{|x|^{2\,(a+1)}}}\leq \ird{\frac{|\nabla u|^2}{|x|^{2\,a}}}\;.
\ee
This holds true for any $d\in\N$, $d\ge 2$ and any $a<0$. For $d\ge 3$, if we define $f(x):=|x|^{-a}\,u(x)$, $x\in\R^d$, then Inequality~\eqref{Ineq:HardyGen} is equivalent to the usual Hardy inequality (with $a=0$), namely
\be{Ineq:ha}
\tfrac 14\,(d-2)^2\ird{\frac{|f|^2}{|x|^2}}\leq \ird{|\nabla f|^2}\,, \quad \;f\in \mathcal D^{1,2}(\R^d)\;.
\ee
Using Schwarz' symmetrization, it is then straightforward to see that optimality is achieved for radial functions, thus showing that, with $\sigma=(d-2-2\,a)/2$, Inequality~\eqref{Eqn:LogSobR} implies Inequality~\eqref{Ineq:HardyGen} for any function $u\in\mathcal D^{1,2}_{a}(\R^d)$ (and not only for radial functions), that is Hardy's inequality if $a=0$ and all Caffarelli-Kohn-Nirenberg inequalities with $b=a+1$, $a<0$, otherwise.

Summarizing, the family of inequalities \eqref{Ineq:GLogHardyRad} of Theorem B' implies as extreme cases the logarithmic Sobolev inequality \eqref{Ineq:ls} at the endpoint $\gamma = 1/4$, and the euclidean logarithmic Sobolev inequality \eqref{Ineq:ls-Gross} and the Hardy inequality \eqref{Ineq:ha} as $\gamma$ tends to $+\infty$. In both cases, one-dimensional versions of the inequalities are involved. On $\mathcal C$, it is possible to recover the optimal logarithmic Sobolev inequality from the logarithmic Hardy inequality as follows.

\medskip Let $d\mu$ and and $d\nu_\sigma(t):=(2\pi\sigma^2)^{-1/2}\,\exp(-t^2/(2\sigma^2))\,dt$ be respectively the uniform probability measure on $\S$ induced by Lebesgue's measure on $\R^d$ and the gaussian probability measure on $\R$. Using the tensorization property of the logarithmic Sobolev inequalities (see for instance \cite{MR1845806}), we obtain the
\begin{lem}\label{Lem:LSIcndGen} For any $d\ge2$, the following inequality holds
\be{Ineq:LogSobCylinder}
\icnd{|w|^2\,\log\(\frac{|w|^2}{\icnd{|w|^2}}\)}+\mathcal K_d^\sigma\icnd{|w|^2}\le\max\left\{\frac 2{d-1},2\sigma^2\right\}\icnd{|\nabla w|^2}\quad\forall\;w\in H^1(\mathcal C)
\ee
with optimal constant
\[
\mathcal K_d^\sigma=\frac 12\,\log\(2\,\pi\,e^2\,\sigma^2\,|\S|^2\)=1+\frac 12\,\log\(\frac{8\,\pi^{d+1}\,\sigma^2}{\Gamma(d/2)^2}\)\;.
\]\end{lem}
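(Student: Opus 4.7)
The plan is to realize concretely the tensorization of log-Sobolev inequalities announced in the statement, combining two sharp ingredients: the optimal LSI on $(\S,d\mu)$,
\[
\int_\S f^2\log f^2\,d\mu-\Bigl(\!\int_\S f^2\,d\mu\Bigr)\log\!\Bigl(\!\int_\S f^2\,d\mu\Bigr)\le\tfrac{2}{d-1}\!\int_\S|\nabla_\omega f|^2\,d\mu,
\]
valid for all $d\ge 2$ (see \cite{MR1845806}), and the Euclidean LSI on $(\R,ds)$ in its one-parameter form
\[
\int_\R u^2\log\!\Bigl(\!u^2\!\Big/\!\int_\R\! u^2\,ds\Bigr)ds+\tfrac12\log(c\,\pi e^2)\!\int_\R u^2\,ds\le c\!\int_\R(u')^2\,ds\qquad(c>0),
\]
which follows from \eqref{Ineq:ls} via the elementary bound $\log X\le X/\lambda+\log\lambda-1$ optimized in $\lambda$. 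The choice that produces the target constant $\mathcal K_d^\sigma$ is $c=2\sigma^2$.

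My first step will be to apply the sphere inequality fiber-wise at each $s\in\R$ to $f(\omega)=w(s,\omega)$, unfold $d\mu=dS/|\S|$, and integrate in $s$. Introducing the fiber integral $W(s):=\int_\S w^2(s,\omega)\,dS$, this will produce
\[
\int_{\mathcal C} w^2\log w^2\,dy-\int_\R W\log\bigl(W/|\S|\bigr)\,ds\le\tfrac{2}{d-1}\!\int_{\mathcal C}|\nabla_\omega w|^2\,dy.
\]
My second step will be to apply the Euclidean LSI with $c=2\sigma^2$ to $u=\sqrt W$, which gives
\[
\int_\R W\log\!\Bigl(\!W\!\Big/\!\int_{\mathcal C}\!w^2 dy\Bigr)ds+\tfrac12\log(2\pi e^2\sigma^2)\!\int_{\mathcal C}\!w^2 dy\le 2\sigma^2\!\int_\R\!\bigl((\sqrt W)'\bigr)^2 ds.
\]
The two steps will be glued by a one-line Cauchy--Schwarz estimate: from $W'=2\int w\,\partial_s w\,dS$ one gets $\bigl((\sqrt W)'\bigr)^2=(W')^2/(4W)\le\int(\partial_s w)^2\,dS$ pointwise in $s$, whence $\int_\R((\sqrt W)')^2\,ds\le\int_{\mathcal C}|\partial_s w|^2\,dy$.

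After substituting the output of the second step into the first and invoking the Cauchy--Schwarz bound, the left-hand constants will combine to $\log|\S|+\tfrac12\log(2\pi e^2\sigma^2)=\tfrac12\log(2\pi e^2\sigma^2|\S|^2)=\mathcal K_d^\sigma$, while the right-hand side becomes $\tfrac{2}{d-1}\int|\nabla_\omega w|^2 dy+2\sigma^2\int|\partial_s w|^2\,dy\le\max\{2/(d-1),2\sigma^2\}\int|\nabla w|^2\,dy$, which is exactly \eqref{Ineq:LogSobCylinder}. The only delicate point, and what ensures no loss against the sharp angular constant $2/(d-1)$, is the Cauchy--Schwarz reduction from the scalar fiber integral $\sqrt{W(s)}$ back to $w(s,\omega)$; an alternative probabilistic tensorization on $(\R\times\S,\,d\nu_\sigma\otimes d\mu)$ followed by translation to Lebesgue measure would introduce a spurious term proportional to $\int s^2 w^2\,dy$ coming from the logarithm of the Gaussian density, which would cancel only in the regime $\sigma^2\ge 1/(d-1)$, so the Lebesgue-based argument described here is preferable as it handles all $\sigma^2>0$ uniformly.
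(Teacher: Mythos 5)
Your argument is correct, and it rests on the same two sharp ingredients as the paper's proof: the optimal logarithmic Sobolev inequality on $(\S,d\mu)$ with constant $2/(d-1)$ and the sharp one-dimensional Gaussian/Euclidean inequality with constant $2\sigma^2$, combined by tensorization. The difference is in how the tensorization is executed. The paper invokes the abstract tensorization property for the product probability measure $d\mu\otimes d\nu_\sigma$ and then rewrites the result with respect to Lebesgue's measure; you instead use the exact fiber decomposition of the entropy, $\icnd{w^2\log w^2}-M\log M=\int_\R\big[\int_\S w^2\log w^2\,dS-W\log W\big]ds+\big[\ir{W\log W}-M\log M\big]$ with $W(s)=\int_\S w^2\,dS$ and $M=\icnd{w^2}$, handle the two pieces separately, and glue them with the pointwise Cauchy--Schwarz bound $\big((\sqrt W)'\big)^2\le\int_\S(\partial_s w)^2\,dS$. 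This is a genuine gain in precision: as you correctly observe, unfolding the product-measure inequality \emph{after} taking the maximum of the two constants leaves an uncancelled term $\big(\tfrac1{2\sigma^2}-\tfrac{C}{4\sigma^4}\big)\ir{s^2\,|w|^2}$ (and perturbs the additive constant) whenever $C=\max\{2/(d-1),2\sigma^2\}\ne 2\sigma^2$, so the constants must be kept separate on the two derivative terms until the very last step --- which is exactly what your route does, uniformly in $\sigma>0$. Two small points to tidy up: (i) you should note that $\sqrt W\in H^1(\R)$, which follows from your own Cauchy--Schwarz estimate together with $\int_\R W\,ds=M<\infty$ (approximating by $\sqrt{W+\varepsilon}$ if needed where $W$ vanishes); (ii) the lemma also asserts that $\mathcal K_d^\sigma$ is \emph{optimal}, which your proof does not address --- the paper checks this with test functions of the form $1+\varphi_1/n$ (spherical harmonics) and perturbations of the Gaussian by Hermite functions, and when $2\sigma^2\ge 2/(d-1)$ optimality already follows from your own computation by testing with functions depending on $s$ alone, for which the inequality reduces to the sharp one-dimensional Euclidean logarithmic Sobolev inequality.
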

\begin{proof} The sharp logarithmic Sobolev inequality on the sphere $\S$ is
\[
\int_{\S}|w|^2\,\log\(\frac{|w|^2}{\int_{\S}|w|^2\,d\mu}\)\;d\mu\le\frac 2{d-1}\int_{\S}|\nabla w|^2\;d\mu\quad\forall\;w\in H^1(\S)
\]
where $d\mu$ is the uniform probability measure on $\S$ induced by Lebesgue's measure in $\R^d$. It can be recovered as the limit as $q\to 2_+$ of sharp interpolation inequalities stated in \cite{MR1230930}, namely
\[
\frac 2{q-2}\left[\(\int_{\S}|w|^q\;d\mu\)^\frac 2q-\int_{\S}|w|^2\;d\mu\right]\le\frac 2{d-1}\int_{\S}|\nabla w|^2\;d\mu\;,
\]
and optimality is easily checked by considering the sequence of test functions $w_n=1+\varphi_1/n$, where $\varphi_1$ is a spherical harmonic function associated to the first non-zero eigenvalue of the Laplace-Beltrami operator on the sphere. On $\R$, the logarithmic Sobolev inequality associated to the gaussian probability measure $d\nu_\sigma$ has been established by L. Gross in \cite{Gross75}:
\[
\int_{\R}|w|^2\,\log\(\frac{|w|^2}{\int_{\R}|w|^2\,d\nu_\sigma}\)\;d\nu_\sigma\le 2\,\sigma^2\int_{\R}|w'|^2\;d\nu_\sigma\quad\forall\;w\in H^1(\R)\;.
\]

Again the constant $2\sigma^2$ is optimal as can be checked considering the sequence of test functions $w_n=1+\psi_1/n$, where $\psi_1(t)=t\,\exp(-t^2/(2\sigma)^2)$ is the first non constant Hermite function, up to a scaling.

The tensorization property of the logarithmic Sobolev inequalities shows that
\[
\int_{\mathcal C}|w|^2\,\log\(\frac{|w|^2}{\int_{\mathcal C}|w|^2\,d\mu\otimes d\nu_\sigma}\)\;d\mu\otimes d\nu_\sigma\le\max\left\{\frac 2{d-1},2\sigma^2\right\}\int_{\mathcal C}|\nabla w|^2\;d\mu\otimes d\nu_\sigma\;.
\]
Taking into account the normalization of $d\mu$ and $d\nu_\sigma$, $|\S|={2\,\pi^{d/2}}/{\Gamma\(d/2\)}$ and rewriting Gross' inequality with respect to Lebesgue's measure, we end up with \eqref{Ineq:LogSobCylinder}. The constant $\mathcal K_d^\sigma$ is optimal as can be shown again by considering a sequence of test functions based either on spherical harmonics or on Hermite functions.
\end{proof}

As a special case, for $\sigma^2=1/(d-1)$ and $\mathcal K_d:=\mathcal K_d^\sigma$, we have the following inequality on the cylinder.
\begin{cor}\label{Lem:LSIcnd} For any $d\ge2$, with $\mathcal K_d=1+\frac 12\,\log\big(\frac{8\,\pi^{d+1}}{(d-1)\,\Gamma(d/2)^2}\big)$, the following inequality holds
\be{Ineq:LSIcnd}
\icnd{|w|^2\,\log\(\frac{|w|^2}{\icnd{|w|^2}}\)}+\mathcal K_d\icnd{|w|^2}\le\frac 2{d-1}\icnd{|\nabla w|^2}\quad\forall\;w\in H^1(\mathcal C)\;.
\ee
\end{cor}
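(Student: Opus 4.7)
The plan is to derive this corollary directly from Lemma \ref{Lem:LSIcndGen} by a specific choice of the parameter $\sigma$. Since Lemma \ref{Lem:LSIcndGen} provides the general inequality
\[
\icnd{|w|^2\,\log\(\frac{|w|^2}{\icnd{|w|^2}}\)}+\mathcal K_d^\sigma\icnd{|w|^2}\le\max\left\{\tfrac 2{d-1},2\sigma^2\right\}\icnd{|\nabla w|^2}
\]
for any $\sigma>0$, with $\mathcal K_d^\sigma=1+\tfrac 12\,\log\big(\tfrac{8\pi^{d+1}\sigma^2}{\Gamma(d/2)^2}\big)$, the strategy is simply to balance the two terms inside the maximum by equating $2\sigma^2=\tfrac{2}{d-1}$, i.e.\ by choosing $\sigma^2=1/(d-1)$.

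With this choice, the coefficient on the right-hand side becomes $\tfrac{2}{d-1}$, and the constant $\mathcal K_d^\sigma$ becomes
\[
\mathcal K_d^{1/\sqrt{d-1}}=1+\tfrac 12\,\log\!\(\tfrac{8\pi^{d+1}}{(d-1)\,\Gamma(d/2)^2}\)=\mathcal K_d,
\]
which matches exactly the constant stated in the corollary. Substituting these values into the general inequality yields \eqref{Ineq:LSIcnd}.

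There is no real obstacle: the work has already been done in Lemma \ref{Lem:LSIcndGen}, and the choice $\sigma^2 = 1/(d-1)$ is precisely the one that makes the two quantities inside the maximum coincide. I would remark briefly that this choice is natural because it is the value for which the spherical and gaussian contributions to the tensorized logarithmic Sobolev inequality become balanced, so that neither direction of the cylinder dominates the constant. No additional computation or estimate is needed beyond verifying the substitution.
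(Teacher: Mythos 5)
Your proposal is correct and is exactly the paper's own derivation: Corollary~\ref{Lem:LSIcnd} is obtained from Lemma~\ref{Lem:LSIcndGen} by setting $\sigma^2=1/(d-1)$, which equalizes the two terms in the maximum and turns $\mathcal K_d^\sigma$ into $\mathcal K_d$. Nothing further is needed.
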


Using $\log(1+X)\le\alpha-1-\log\alpha+\alpha\,X$ for any $\alpha>0$, $X>0$, which amounts to write $\log Y\le Y-1$ with $Y=\alpha\,(X+1)$, and applying it in \eqref{Ineq:GLogHardy-w} with $X=\sigma^{-2}\icnd{|\nabla w|^2}/\icnd{|w|^2}$, $\sigma=(d-2-2\,a)/2$, $\alpha=\sigma^2/(\gamma\,(d-1))$, we deduce that
\[
\icnd{|w|^2\,\log\(\frac{|w|^2}{\icnd{|w|^2}}\)}- 2\,\gamma\left[\log\(\gamma\,(d-1)\,\mathsf C_{\rm GLH}\)+ \tfrac{\sigma^2}{\gamma\,(d-1)}-1\right]\icnd{|w|^2} \leq \frac 2{d-1}\,\icnd{|\nabla w|^2}
\]
for any $w\in H^1(\mathcal C)$. We may observe that
\[
\mathcal K_d+2\,\gamma\left[\log \(\gamma\,(d-1)\,\mathsf C_{\rm GLH}^*\) + \frac{\sigma^2}{\gamma\,(d-1)}-1\right]=\frac{4\,\gamma-1}2\left[Z-1-\log Z\right]\quad\mbox{with}\quad Z=\frac{4\,\sigma^2}{(4\,\gamma-1)\,(d-1)}\;.
\]
Hence, if $4\,\sigma^2=(4\,\gamma-1)\,(d-1)$ and if (for this specific value) $\mathsf C_{\rm GLH}=\mathsf C_{\rm GLH}^*$, then the optimal logarithmic Sobolev inequality \eqref{Ineq:LSIcnd} is a consequence of \eqref{Ineq:GLogHardy-w}.

We may observe that $4\,\sigma^2=(4\,\gamma-1)\,(d-1)$ means $Z=1$ and exactly corresponds to the threshold for the symmetry breaking result of Theorem~C. Notice that proving that $\mathsf C_{\rm GLH}=\mathsf C_{\rm GLH}^*$ is an open question.

\section{Symmetry breaking. Proof of Theorem C}\label{Sec:SymBreaking}

In this section we study the symmetry breaking of the Caffarelli-Kohn-Nirenberg interpolation inequality as well as of the logarithmic Hardy inequality. To achieve this, we use a technique introduced Catrina and Wang in \cite{Catrina-Wang-01} and later improved by Felli and Schneider in \cite{Felli-Schneider-03}. Also see \cite{MR1918754,MR2051129,DET}. The method amounts to consider a functional made of the difference of the two sides of the inequality, with a constant chosen so that the functional takes value zero in the optimal case, among radially symmetric functions, when the inequality is written for functions on $\R^d$. Equivalently, we can consider functions depending only on one real variable in the case of the cylinder. By linearizing around the optimal radial function, we obtain an explicit linear operator and can study when the eigenvalue corresponding to the subspace generated by the first non-trivial spherical harmonic function becomes negative. It is then clear that the functional can change sign, so that optimality cannot be achieved among radial functions. This proves the \emph{symmetry breaking}. We will apply the method first to the interpolation inequality \eqref{Ineq:Gen_interpw}, thus generalizing the results of Felli and Schneider to a more general Caffarelli-Kohn-Nirenberg interpolation inequality than the one they have considered, and then to the logarithmic Hardy inequality \eqref{Ineq:GLogHardy-w}.

\subsection{ Symmetry breaking for the interpolation inequality}\label{Sec:SymBreakinginterpolation}

Based on \eqref{Ineq:GN}, consider on $H^1(\mathcal C)$ the functional
\[
\mathcal J[w]:=\icnd{\(|\nabla w|^2+\tfrac 14\,(d-2-2\,a)^2\,|w|^2\)}-\left[\mathsf C^*(\theta,p,a)\right]^{-\frac 1\theta}\,\frac{\(\icnd{|w|^p}\)^\frac 2{p\,\theta}}{\(\icnd{|w|^2}\)^\frac{1-\theta}\theta}\;.
\]
Among functions $w\in H^1(\mathcal C)$ which depend only on $s$, $\mathcal J[w]$ is nonnegative, its minimum is zero and it is achieved by
\[
\overline w(y):=\big[\cosh(\lambda\,s)\big]^{-\frac 2{p-2}},\quad y=(s,\omega)\in\R \times\S=\mathcal C\;,\]
with $\lambda:=\tfrac 14\,(d-2-2\,a)\,(p-2)\,\sqrt{\tfrac{p+2}{2\,p\,\theta-(p-2)}}$. See the proof of Lemma~\ref{Lem:interpRadial} for more details. We can notice that
\[
\left[\mathsf C(\theta,p,a)\right]^{-\frac 1\theta}=\icnd{\(|\nabla \overline w|^2+\tfrac 14\,(d-2-2\,a)^2\,|\overline w|^2\)}\;\frac{\(\icnd{|\overline w|^2}\)^\frac{1-\theta}\theta}{\(\icnd{|\overline w|^p}\)^\frac 2{p\,\theta}}\;.
\]
With a slight abuse of notations, we shall write $\overline w$ as a function of $s$ only, which solves the ODE
\[
\lambda^2\,(p-2)^2\,w''-4\,w+2\,p\,|w|^{p-2}\,w=0
\]
and, as in the proof of Lemma~\ref{Lem:interpRadial},
\begin{eqnarray*}
&&\ir{|\overline w|^2}=\frac 1\lambda\,I_2\;,\\
&&\ir{|\overline w'|^2}=\lambda\,J_2=\frac{4\,\lambda}{p^2-4}\,I_2\;,\\
&&\ir{|\overline w|^p}=\frac 1\lambda\,I_p=\frac 4{p+2}\,\frac 1\lambda\,I_2\;.
\end{eqnarray*}
In terms of $p$ and $\theta$, we investigate the symmetry of optimal functions for \eqref{Ineq:Gen_interp} or, equivalently, for \eqref{Ineq:Gen_interpw} in the range
\[
0<p-2\le\frac 4{d-2}\quad\mbox{and}\quad \vartheta(p,d)\le \theta \le 1\;.
\]
Consider now $\mathcal J[\overline w+\varepsilon\,\phi]$ and Taylor expand it at order $2$ in $\varepsilon$, using the fact that $\overline w$ is a critical point and assuming that $\icnd{\overline w^{p-1}\,\phi}=0$:
\[
\frac 1{\varepsilon^2}\,\mathcal J[\overline w+\varepsilon\,\phi]=\icnd{|\nabla\phi|^2}-\kappa\icnd{\overline w^{p-2}\,|\phi|^2}+\mu\icnd{|\phi|^2}-\nu\(\icnd{\overline w\,\phi}\)^2+o(1)
\]
as $\varepsilon\to 0$, with
\begin{eqnarray*}
&&\kappa:=\frac{p-1}\theta\,\frac1{I_p}\,\(\lambda^2\,J_2+\tfrac 14\,(d-2-2\,a)^2\,I_2\)\;,\\
&&\mu:=\tfrac 14\,(d-2-2\,a)^2+\frac{1-\theta}\theta\,\frac 1{I_2}\,\(\lambda^2\,J_2+\tfrac 14\,(d-2-2\,a)^2\,I_2\)\,,\\
&&\nu:=\frac{1-\theta}{2\,\theta^2}\,\frac\lambda{I_2^2}\,\(\lambda^2\,J_2+\tfrac 14\,(d-2-2\,a)^2\,I_2\)\,.
\end{eqnarray*}
Spectral properties of the operator $\mathcal L:=-\Delta+\kappa\,\overline w^{p-2}+\mu$ are well known. Eigenfunctions can be characterized in terms of Legendre's polynomials, see for instance \cite[p. 74]{Landau-Lifschitz-67} and \cite{Felli-Schneider-03}. Using spherical coordinates and spherical harmonic functions, see \cite{MR0282313}, the discrete spectrum is made of the eigenvalues
\[
\lambda_{i,j}=\mu+i\,(d+i-2)-\frac{\lambda^2}4\(\sqrt{1+\frac{4\,\kappa}{\lambda^2}}-(1+2\,j)\)^2\quad \forall\;i\,,\;j\in\N\;,
\]
as long as $ \sqrt{1+4\,\kappa/\lambda^2}\ge 2\,j+1$. The eigenspace of $\mathcal L$ corresponding to $\lambda_{0,0}$ is generated by $\overline w$. Next we observe that the eigenfunction $\phi_{(1,0)}$ associated to $\lambda_{1,0}$ is not radially symmetric and such that $\icnd{\overline w\,\phi_{(1,0)}}=0$ and $\icnd{\overline w^{p-1}\,\phi_{(1,0)}}=0$. Hence, \emph{if $\lambda_{1,0}<0$, optimal functions for \eqref{Ineq:Gen_interpw} cannot be radially symmetric} and, as a consequence, $\mathsf C(\theta,p,a)>\mathsf C^*(\theta,p,a)$.

A lengthy computation allows to characterize for which values of $p$, $\theta$, $a$ and $d$, the eigenvalue $\lambda_{1,0}$ takes negative values. Using the fact that for any $a<(d-2)/2$ the quantity $2+p\,(2\,\theta-1)$ is positive, the corresponding condition turns out to be
\[
4\,p\,(d-1)\left(p^2+2\,p+8\,\theta -8\,\right)-\left(d^2+4\,a^2-4\,a\,(d-2)\right)(p-2)\,(p+2)^2 <0\;.
\]
This is never the case in the admissible range of our parameters if $0\le a<(d-2)/2$. On the other hand, for $a<0$ there is always a domain where symmetry breaking occurs. More precisely let
\[
\vartheta(p,d)=\frac{d\,(p-2)}{2\,p}\;,\quad\Theta(a,p,d):=\frac{p-2}{32\,(d-1)\,p}\, \left[ (p+2)^2\,(d^2 + 4\,a^2- 4\,a\,(d-2))-4\,p\,(p+4)\,(d-1) \right] \,.
\]
Symmetry breaking occurs if $\theta<\Theta(a,p,d)$. We observe that, for $p\in [2,2^*)$, we have $\vartheta(p,d)<\Theta(a,p,d)$ if and only if $a<a_-(p)$ with
\[
a_-(p):= \frac{d-2}{2}-\frac{2\,(d-1)}{p+2}\;.
\]
We note that the condition $\vartheta(p,d)<1$ is always satisfied under the assumption $p\in [2,2^*)$. On the other hand the condition $\Theta(a,p,d) \le 1$ is equivalent to
\[
a \ge \frac{d-2}{2}-\frac{2 \sqrt{d-1}}{\sqrt{(p-2)(p+2)}}\;.
\]
Finally, we notice that $d/4={\partial \vartheta(p,d)}/{\partial p}_{|p=2}<[1+{(d-2\,a-2)^2}/{(d-1)}]/4={\partial \Theta(a,p,d)}/{\partial p}_{|p=2}$ if $a<-1/2$. Summarizing these observations, we arrive at the following result.
\begin{thm}\label{Thm:SymmetryBreaking} Let $d\ge 2$, $2<p<2^*$ and $a<a_-(p)$. Optimality for \eqref{Ineq:Gen_interp} (resp. for \eqref{Ineq:Gen_interpw}) is not achieved among radial (resp. $s$-dependent) functions, that is, $\mathsf C(\theta, p,a)>\mathsf C^*(\theta, p,a)$ if either
\[
\vartheta(p,d)\le\theta<\Theta(a,p,d)\quad\mbox{when}\quad a \geq \frac{d-2}{2}-\frac{2 \sqrt{d-1}}{\sqrt{(p-2)(p+2)}}
\]
or
\[
\vartheta(p,d)\le\theta\leq 1\quad\mbox{when}\quad a<\frac{d-2}{2}-\frac{2 \sqrt{d-1}}{\sqrt{(p-2)(p+2)}}\;.
\]
In other words, symmetry breaking occurs for the optimal functions of \eqref{Ineq:Gen_interp} if $a$, $\theta$ and $p$ are in any of the two above regions. Moreover, if $a<-1/2$, there exists $\varepsilon >0$, $\gamma_1>d/4$ and $\gamma_2>\gamma_1$ such that symmetry breaking occurs if $\theta=\gamma\,(p-2)$ for any $\gamma\in(\gamma_1, \gamma_2)$ and any $p\in(2,2+\varepsilon)$.\end{thm}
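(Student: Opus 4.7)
The plan is to execute the Catrina--Wang/Felli--Schneider scheme already set up in the paragraphs above. With $\mathcal J$ as defined there, one has $\mathcal J\ge 0$ on $s$-dependent functions and $\mathcal J[\overline w]=0$ at the explicit one-dimensional extremal $\overline w$. The goal is to exhibit a \emph{non-radial} direction $\phi$ along which the second variation at $\overline w$ is strictly negative, so that $\mathcal J[\overline w+\varepsilon\phi]<0$ for small $\varepsilon$; this forces $\mathsf C(\theta,p,a)>\mathsf C^*(\theta,p,a)$ and yields symmetry breaking for \eqref{Ineq:Gen_interpw} (equivalently \eqref{Ineq:Gen_interp}).

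Expanding $\mathcal J[\overline w+\varepsilon\phi]$ to order $\varepsilon^2$, the linear term vanishes because $\overline w$ solves the associated Euler--Lagrange equation. Imposing the single scalar constraint $\icnd{\overline w^{p-1}\phi}=0$ (which merely eliminates the trivial direction $\phi\propto\overline w$) reduces, after a short computation, the quadratic term to the form $Q[\phi]$ displayed above with the explicit constants $\kappa$, $\mu$, $\nu$. The main spectral step is then to diagonalize $\mathcal L:=-\Delta+\kappa\,\overline w^{p-2}+\mu$ on $\mathcal C=\R\times\S$: separating variables with spherical harmonics on $\S$ and using the classical spectrum of the one-dimensional operator $-\partial_s^2+\kappa\,(\cosh\lambda s)^{-2}$ (see \cite[p.~74]{Landau-Lifschitz-67}, and the case $\theta=1$ of \cite{Felli-Schneider-03}) gives the eigenvalues $\lambda_{i,j}$ written in the text. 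The eigenfunction $\phi_{(1,0)}$ attached to $\lambda_{1,0}$ is the product of the first non-constant spherical harmonic and the positive ground state on $\R$; it is automatically orthogonal to both $\overline w$ and $\overline w^{p-1}$, so the rank-one $\nu$-term does not contribute and the sign of $Q[\phi_{(1,0)}]$ equals that of $\lambda_{1,0}$. Symmetry breaking is therefore equivalent to $\lambda_{1,0}<0$.

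The remaining and principal obstacle is arithmetic: substitute the explicit values of $\kappa$, $\mu$, $\lambda$ in terms of $(p,\theta,a,d)$, using $I_2$, $J_2$, $I_p$ as in the proof of Lemma~\ref{Lem:interpRadial}, into the expression for $\lambda_{1,0}$ and simplify. Using that $2+p\,(2\,\theta-1)>0$ on the admissible range, the inequality $\lambda_{1,0}<0$ reduces to the polynomial condition
\[
4\,p\,(d-1)\,(p^2+2\,p+8\,\theta-8)-\bigl(d^2+4\,a^2-4\,a\,(d-2)\bigr)\,(p-2)\,(p+2)^2<0,
\]
which is linear in $\theta$; solving for $\theta$ produces the threshold $\theta<\Theta(a,p,d)$. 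Elementary comparisons then yield $\vartheta(p,d)<\Theta(a,p,d)$ if and only if $a<a_-(p)$, and $\Theta(a,p,d)\le 1$ if and only if $a\ge\tfrac{d-2}{2}-\tfrac{2\,\sqrt{d-1}}{\sqrt{(p-2)\,(p+2)}}$, which separates the two regimes in the statement.

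For the final assertion, I set $\theta=\gamma\,(p-2)$ and Taylor expand at $p=2$. A direct computation gives $\partial_p\vartheta(\cdot,d)|_{p=2}=d/4$ and $\partial_p\Theta(a,\cdot,d)|_{p=2}=\tfrac 14\bigl[1+(d-2\,a-2)^2/(d-1)\bigr]$. When $a<-1/2$, the latter strictly exceeds the former, so by continuity there exist $\gamma_2>\gamma_1>d/4$ and $\varepsilon>0$ for which $\vartheta(p,d)\le\gamma\,(p-2)<\Theta(a,p,d)$ whenever $p\in(2,2+\varepsilon)$ and $\gamma\in(\gamma_1,\gamma_2)$, delivering the claimed symmetry-breaking window adjacent to $p=2$.
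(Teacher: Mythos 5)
Your proposal is correct and follows essentially the same route as the paper: the Felli--Schneider second-variation argument around the one-dimensional extremal $\overline w$, diagonalization of $\mathcal L$ by spherical harmonics and the explicit $(\cosh\lambda s)^{-2}$ potential, reduction of $\lambda_{1,0}<0$ to the same polynomial condition in $\theta$, the same comparisons of $\vartheta$, $\Theta$ and $1$ to delimit the two regimes, and the same derivative computation at $p=2$ for the final assertion.
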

An elementary computation shows that, $\Theta(a,p,d)>1$ amounts to \eqref{Cdt:FS}. This condition is the symmetry breaking condition for Caffarelli-Kohn-Nirenberg inequalities found in~\cite{Felli-Schneider-03}.

\subsection{Symmetry breaking for the weighted logarithmic Hardy inequality. Proof of Theorem~C}\label{Sec:SymBreakingLogHardy}

We now consider the weighted logarithmic Hardy inequalities of Theorem A' in the equivalent form of Theorem A''. As we have seen in Section \ref{Sec:interpol}, after the Emden-Fowler transformation these inequalities take the equivalent form:
\[
\icnd{|w|^2\log\(\frac{|w|^2}{\icnd{|w|^2}}\)}+2\,\gamma\,\log\mathsf C_{\rm GLH}^*\icnd{|w|^2}\leq2\,\gamma\!\icnd{|w|^2}\;\log\left[\frac{\icnd{|\nabla w|^2}}{\icnd{|w|^2}}+\sigma^2\right]
\]
with $\sigma=(d-2-2\,a)/2$. It is an open question to give sufficient conditions for which optimality is achieved among functions depending on $s$ only, so that $2\,\gamma\,\log\mathsf C_{\rm GLH}^*=\mathcal K(\gamma, \sigma)+\log|\S|$. We recall that equality among radial functions in \eqref{Kgamma} is achieved by
\[
\widetilde w(s,\omega):=|\S|^{-1/2}\,\overline w(s)\;,\quad y=\(s,\omega\)\in \R \times \S= \mathcal C\;,\]
where
\[
\overline w(s)=\(\frac{4\,\sigma^2}{2\,\pi\,(4\,\gamma-1)}\)^{1/4}\,\exp\(-\frac{\sigma^2\,s^2}{4\,\gamma-1}\)\quad\forall\;s\in\R\;.
\]
We note that $\widetilde w(s,\omega)$ is normalized to $1$ in $L^2(\mathcal C)$. As a consequence, it follows that
\[
\mathcal K(\gamma, \sigma)+\log|\S| = 2 \, \gamma \;\log \left[\icnd{|\nabla \widetilde{w}|^2}+\sigma^2 \right] - \icnd{|\widetilde{w}|^2\log |\widetilde{w}|^2}\,.
\]
After these preliminaries, consider the functional
\[
\mathcal F[w]:=\frac{\icnd{|\nabla w|^2}}{\icnd{|w|^2}}+\sigma^2- |\S|^\frac 1{2\,\gamma}\,\exp\left[\frac {\mathcal K(\gamma, \sigma)}{2\,\gamma}+\frac 1{2\,\gamma}\icnd{\frac{|w|^2}{\icnd{|w|^2}}\,\log\(\frac{|w|^2}{\icnd{|w|^2}}\)}\right]\,.
\]
We know that $\mathcal F[\widetilde w]=0$. Let
\[
\mathcal G[\phi]:=\lim_{\varepsilon\to 0}\frac{\mathcal F[\widetilde w+\varepsilon\,\phi]}{2\,\varepsilon^2}\;.
\]
We have in mind to consider an angle dependent perturbation of $\widetilde w$, so we shall assume that
\[
\icnd{\widetilde w\,\phi}=0\quad\mbox{and}\quad\icnd{\log|\widetilde w|^2\,\phi}=0\;.
\]
Under this assumption,
\begin{multline*}
\mathcal G[\phi]= \icnd{|\nabla\phi|^2}-\icnd{|\nabla\widetilde w|^2}\icnd{|\phi|^2}\\-\frac 1{2\,\gamma}\,\(\icnd{|\nabla\widetilde w|^2}+\sigma^2\)\cdot\left[\(2-\icnd{|\widetilde w|^2\,\log|\widetilde w|^2}\)\icnd{|\phi|^2}+\icnd{\log|\widetilde w|^2\,|\phi|^2}\right]\,.
\end{multline*}
After some elementary but tedious computations, one finds that
\[
\mathcal G[\phi]=\icnd{(\mathcal L\,\phi)\,\phi}\;,
\]
with $\mathcal L\,\phi:=-\Delta\,\phi+\frac 14\,A^2\,|s|^2\,\phi-\frac 32\,A\,\phi$ and $A:=\frac{4\,\sigma^2}{4\,\gamma-1}$. By separation of variables, it is straightforward to check that the spectrum of $\mathcal L$ is purely discrete and made of the eigenvalues
\[
\lambda_{i,j}=i\,(d+i-2)+A\,(j-1)\quad \forall\;i\,,\;j\in\N\;.
\]
It follows that $\lambda_{1,0}<0$ if
\be{ConCdt}
\frac{d}4 \le \gamma<\frac 14\,\(1+\frac{(d-2\,a-2)^2}{d-1}\)\,.
\ee
Hence symmetry breaking occurs provided that
\[
\frac{d}4 <\frac 14\,\(1+\frac{(d-2\,a-2)^2}{d-1}\)\;,
\]
which is equivalent to $a <- 1/2$. This concludes the proof of Theorem~C.

We recover the limit range for symmetry breaking in the interpolation inequalities studied in Section~\ref{Sec:SymBreakinginterpolation}. Condition~\eqref{ConCdt} asymptotically defines a cone in which symmetry breaking occurs (see Theorem~\ref{Thm:SymmetryBreaking}) given by $d/4={\partial \vartheta(p,d)}/{\partial p}_{|p=2}<\gamma<[1+{(d-2\,a-2)^2}/{(d-1)}]/4={\partial \Theta(a,p,d)}/{\partial p}_{|p=2}$.\clearpage

\subsection{Plots}\label{Sec:Plots}$\phantom{x}$

\begin{figure}[!h]\includegraphics[width=3.5cm]{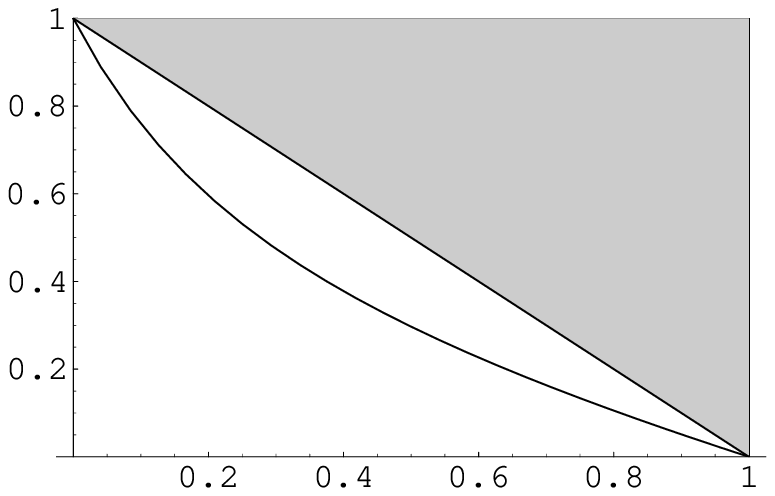}\hspace*{12pt}\includegraphics[width=3.5cm]{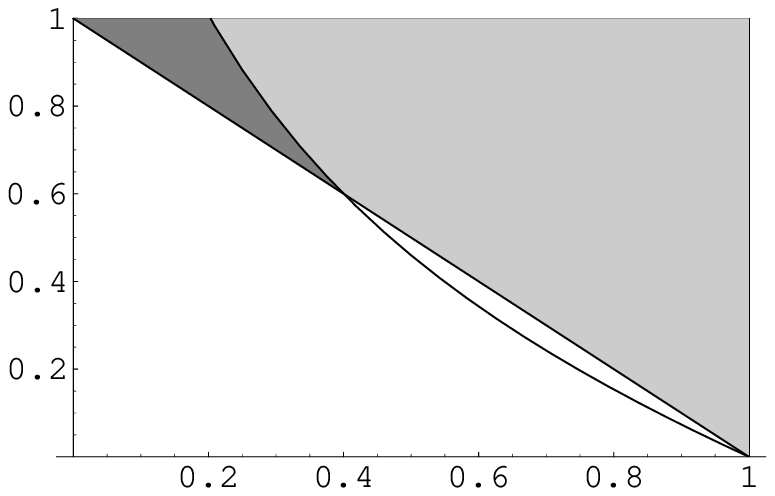}\hspace*{12pt}\includegraphics[width=3.5cm]{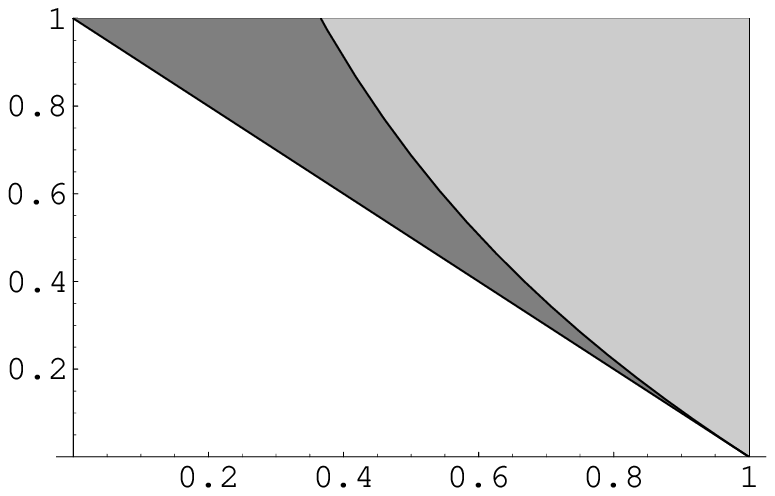}\hspace*{12pt}\includegraphics[width=3.5cm]{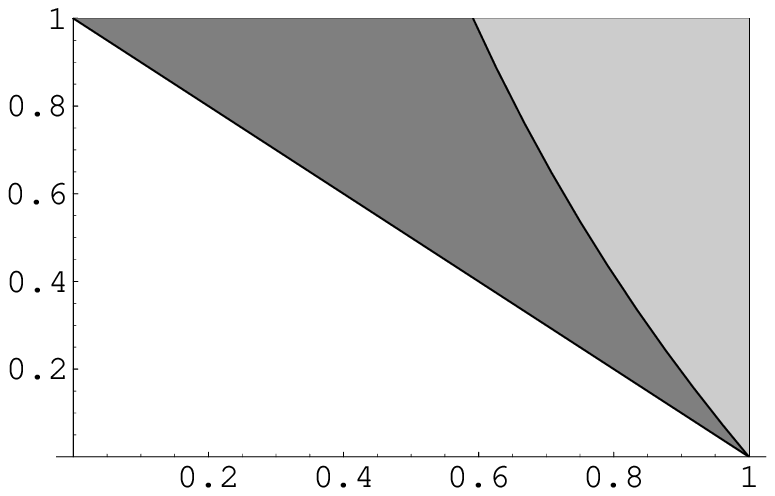}\caption{\small Plot of the admissible regions (gray areas) with symmetry breaking region established in Theorem~\ref{Thm:SymmetryBreaking} (dark grey) in $(\eta,\theta)$ coordinates, with $\eta:=b-a$, for various values of $a$, in dimension $d=3$: from left to right, $a=0$, $a=-0.25$, $a=-0.5$ and $a=-1$. The two curves are $\eta\mapsto\vartheta(p,d)=1-\eta$ and $\eta\mapsto\Theta(a,p,d)$, for $p=2\,d/(d-2+2\,\eta)$. In the range $a\in(-1/2,0)$, they intersect for $a=a_-(p)$, \emph{i.e.} $\eta=2\,a\,(1-d)/(d+2\,a)$. They are tangent at $(\eta,\theta)=(1,0)$ for $a=-1/2$. The symmetry breaking region contains a cone attached to $(\eta,\theta)=(1,0)$ for $a<-1/2$, which determines values of $\gamma$ for which symmetry breaking occurs in the logarithmic Hardy inequality.}\end{figure}

\begin{figure}[!h]\includegraphics[width=5cm]{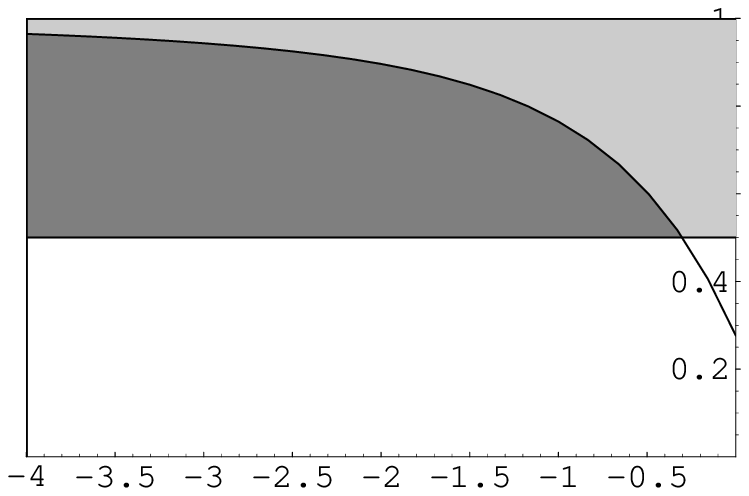}\hspace*{2cm}\includegraphics[width=5cm]{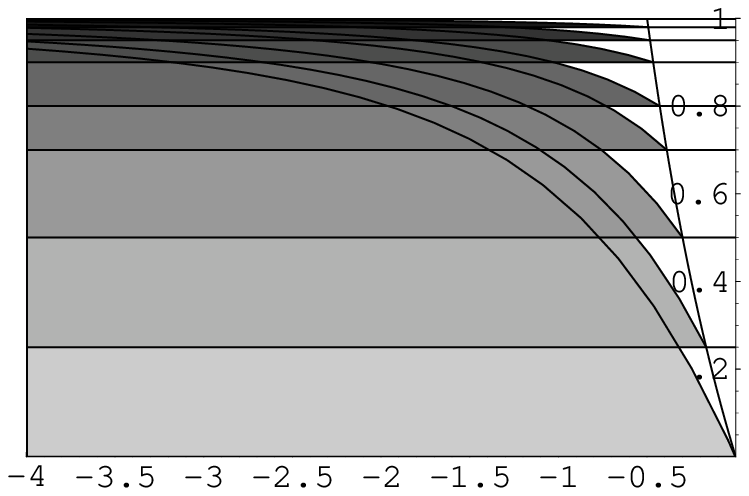}\caption{\small\emph{Left.--} For a given value of $\theta\in (0,1]$, admissible values of the parameters for which \eqref{Ineq:Gen_interp} holds are given by \eqref{Range}: in terms of $(a,\eta)$ with $\eta=b-a$, this simply means $\eta\ge 1-\theta$ (grey areas). According to Theorem~\ref{Thm:SymmetryBreaking}, symmetry breaking occurs if $\theta<\Theta(a,p,d)$, which determines a region $\eta<g(a,\theta)$ (dark grey). Notice that $\eta<g(a,1)$ corresponds to Condition \eqref{Cdt:FS} found by Felli and Schneider. The plot corresponds to $d=3$ and $\theta=0.5$. \emph{Right.--} Regions of symmetry breaking, \emph{i.e.} $1-\theta\le\eta<g(a,\theta)$, are shown for $\theta=1$, $0.75$, $0.5$, $0.3$, $0.2$, $0.1$, $0.05$, $0.02$. For each value of $\theta$, the supremum value for which symmetry breaking has been established is $a=a_-(p)$ for $p=2\,d/(d-2\,\theta)$, which determines a curve $\eta=h(a)$ by requiring that $\theta=1-\eta$. The limit case $\eta=0=h(0)$ corresponds to the case studied by Felli and Schneider, while $h(-1/2)=1$ determines the supremum value for which symmetry breaking has been established in the limit case $\eta=1$, \emph{i.e.} $p=2$, consistently with Theorem C.}\end{figure}

\section*{Concluding remarks}

The purpose of this paper is to establish a new family of inequalities in the Euclidean space $\R^d$ and in the cylinder $\R\times\S$. These inequalities are stronger than Hardy's inequality and the logarithmic Sobolev inequalities, but are related to both of them and, for this reason, we have called them \emph{logarithmic Hardy inequalities}. They are invariant term by term under scaling, which distinguishes them from usual logarithmic Sobolev inequalities. On $\R^d$, they are written for unbounded measures and, as far as we know, cannot be easily reduced to inequalities written for probability measures or for Lebesgue's measure. They also appear as an endpoint of a family of Caffarelli-Kohn-Nirenberg inequalities, which is more general than the subfamily studied for instance by Catrina and Wang in \cite{Catrina-Wang-01}.

A very natural question is to determine whether optimal functions on $\R^d$ are radially symmetric or not. Using the method introduced by Catrina and Wang in \cite{Catrina-Wang-01} and extended in \cite{Felli-Schneider-03} by Felli and Schneider, we prove that optimal functions in $\R^d$ are not radially symmetric functions in the case of the general Caffarelli-Kohn-Nirenberg inequalities and in the corresponding logarithmic Hardy inequalities, for parameters in a certain range. The method is rather simple. It amounts to linearize the inequality around an optimal function among radial functions and study the sign of the first eigenvalue of an associated operator. A negative eigenvalue then means that optimality is achieved among non-radial functions. The results of \emph{symmetry breaking} that we obtain by this method are fully consistent with previously known results. They allow us to characterize a whole region where the weights are strong enough to break the symmetry that would naturally arise from the nonlinearity in the absence of weights (and can then be proved either by symmetrization techniques or by moving plane methods as in \cite{MR634248}). The symmetry region is by far less understood, although it has recently been established in \cite{0902} that both regions are separated by a curve (in the case of the subfamily considered by Catrina and Wang). In the general form of the Caffarelli-Kohn-Nirenberg inequalities, there is an additional term which competes with the nonlinearity to break the symmetry, thus making the analysis more difficult. Hence the main challenge is now to establish the range for symmetry of the optimal functions. This would have some interesting consequences. As mentioned in Section~\ref {Sec:LogRadial}, if, for instance, symmetry holds in the complementary region of the one for which symmetry breaking has been established, then we would recover the optimal logarithmic Sobolev inequality on the cylinder as a direct consequence of the logarithmic Hardy inequality.

\bigskip\noindent{\bf Acknowledgments.} J.D. has been supported by the ECOS contract no. C05E09 and the ANR grants \emph{IFO}, \emph{EVOL} and \emph{CBDif}, and thanks the department of Mathematics of the University of Crete and the Departamento de Ingenier\'{\i}a Matem\'atica of the University of Chile for their warm hospitality. This work is also part of the MathAmSud \emph{NAPDE} project (M.d.P. \& J.D.). Plots have been done with Mathematica$^{\mbox{\tiny{\sc tm}}}$.\par\smallskip\noindent{\sl\small \copyright~2009 by the authors. This paper may be reproduced, in its entirety, for noncommercial purposes.}


\begin{thebibliography}{10}

\bibitem{MR2142067}
{\sc B.~Abdellaoui, E.~Colorado, and I.~Peral}, {\em Some improved
  {C}affarelli-{K}ohn-{N}irenberg inequalities}, Calc. Var. Partial
  Differential Equations, 23 (2005), pp.~327--345.

\bibitem{MR1862130}
{\sc Adimurthi, N.~Chaudhuri, and M.~Ramaswamy}, {\em An improved
  {H}ardy-{S}obolev inequality and its application}, Proc. Amer. Math. Soc.,
  130 (2002), pp.~489--505.

\bibitem{Adi-Filippas-Tertikas}
{\sc Adimurthi, S.~Filippas, and A.~Tertikas}, {\em On the best constant of
  {H}ardy-{S}obolev inequalities}, Nonlinear Anal., 70 (2009), pp.~2826--2833.

\bibitem{MR2427077}
{\sc M.~Agueh}, {\em Gagliardo-{N}irenberg inequalities involving the gradient
  {$L\sp 2$}-norm}, C. R. Math. Acad. Sci. Paris, 346 (2008), pp.~757--762.

\bibitem{MR2198838}
{\sc A.~Alvino, V.~Ferone, and G.~Trombetti}, {\em On the best constant in a
  {H}ardy-{S}obolev inequality}, Appl. Anal., 85 (2006), pp.~171--180.

\bibitem{MR1845806}
{\sc C.~An{\'e}, S.~Blach{\`e}re, D.~Chafa{\"{\i}}, P.~Foug{\`e}res, I.~Gentil,
  F.~Malrieu, C.~Roberto, and G.~Scheffer}, {\em Sur les in\'egalit\'es de
  {S}obolev logarithmiques}, vol.~10 of Panoramas et Synth\`eses [Panoramas and
  Syntheses], Soci\'et\'e Math\'ematique de France, Paris, 2000.
\newblock With a preface by D. Bakry and M. Ledoux.

\bibitem{0528}
{\sc A.~Arnold, J.-P. Bartier, and J.~Dolbeault}, {\em Interpolation between
  logarithmic {S}obolev and {P}oincar{{\'e}} inequalities}, Communications in
  Mathematical Sciences, 5 (2007), pp.~971--979.

\bibitem{MR1842428}
{\sc A.~Arnold, P.~Markowich, G.~Toscani, and A.~Unterreiter}, {\em On convex
  {S}obolev inequalities and the rate of convergence to equilibrium for
  {F}okker-{P}lanck type equations}, Comm. Partial Differential Equations, 26
  (2001), pp.~43--100.

\bibitem{MR0448404}
{\sc T.~Aubin}, {\em Probl\`emes isop\'erim\'etriques et espaces de {S}obolev},
  J. Differential Geometry, 11 (1976), pp.~573--598.

\bibitem{MR2354734}
{\sc F.~G. Avkhadiev and K.-J. Wirths}, {\em Unified {P}oincar\'e and {H}ardy
  inequalities with sharp constants for convex domains}, ZAMM Z. Angew. Math.
  Mech., 87 (2007), pp.~632--642.

\bibitem{MR1918928}
{\sc M.~Badiale and G.~Tarantello}, {\em A {S}obolev-{H}ardy inequality with
  applications to a nonlinear elliptic equation arising in astrophysics}, Arch.
  Ration. Mech. Anal., 163 (2002), pp.~259--293.

\bibitem{MR772092}
{\sc D.~Bakry and M.~{\'E}mery}, {\em Hypercontractivit\'e de semi-groupes de
  diffusion}, C. R. Acad. Sci. Paris S\'er. I Math., 299 (1984), pp.~775--778.

\bibitem{ca-ba-ro2}
{\sc F.~Barthe, P.~Cattiaux, and C.~Roberto}, {\em Concentration for
  independent random variables with heavy tails}, AMRX Appl. Math. Res.
  Express, 2 (2005), pp.~39--60.

\bibitem{bartier2009improved}
{\sc J.~Bartier, A.~Blanchet, J.~Dolbeault, and M.~Escobedo}, {\em {Improved
  intermediate asymptotics for the heat equation}}, Arxiv preprint 0908.2226,
  (2009).

\bibitem{MR2201954}
{\sc J.-P. Bartier and J.~Dolbeault}, {\em Convex {S}obolev inequalities and
  spectral gap}, C. R. Math. Acad. Sci. Paris, 342 (2006), pp.~307--312.

\bibitem{MR1230930}
{\sc W.~Beckner}, {\em Sharp {S}obolev inequalities on the sphere and the
  {M}oser-{T}rudinger inequality}, Ann. of Math. (2), 138 (1993), pp.~213--242.

\bibitem{MR2424899}
{\sc R.~D. Benguria, R.~L. Frank, and M.~Loss}, {\em The sharp constant in the
  {H}ardy-{S}obolev-{M}az'ya inequality in the three dimensional upper
  half-space}, Math. Res. Lett., 15 (2008), pp.~613--622.

\bibitem{Benguria-Loss03}
{\sc R.~D. Benguria and M.~Loss}, {\em Connection between the {L}ieb-{T}hirring
  conjecture for {S}chr\"odinger operators and an isoperimetric problem for
  ovals on the plane}, in Partial differential equations and inverse problems,
  vol.~362 of Contemp. Math., Amer. Math. Soc., Providence, RI, 2004,
  pp.~53--61.

\bibitem{MR0282313}
{\sc M.~Berger, P.~Gauduchon, and E.~Mazet}, {\em Le spectre d'une vari\'et\'e
  riemannienne}, Lecture Notes in Mathematics, Vol. 194, Springer-Verlag,
  Berlin, 1971.

\bibitem{Blanchet:2009sf}
{\sc A.~Blanchet, M.~Bonforte, J.~Dolbeault, G.~Grillo, and J.~V{\'a}zquez},
  {\em Asymptotics of the fast diffusion equation via entropy estimates},
  Archive for Rational Mechanics and Analysis, 191 (2009), pp.~347--385.

\bibitem{MR1682772}
{\sc S.~G. Bobkov and F.~G{\"o}tze}, {\em Exponential integrability and
  transportation cost related to logarithmic {S}obolev inequalities}, J. Funct.
  Anal., 163 (1999), pp.~1--28.

\bibitem{bonforte-2009}
{\sc M.~Bonforte, J.~Dolbeault, G.~Grillo, and J.-L. Vazquez}, {\em Sharp rates
  of decay of solutions to the nonlinear fast diffusion equation via functional
  inequalities}, Arxiv preprint 0907.2986,  (2009).

\bibitem{BrV}
{\sc H.~Brezis and J.~L. V{\'a}zquez}, {\em Blow-up solutions of some nonlinear
  elliptic problems}, Rev. Mat. Univ. Complut. Madrid, 10 (1997), pp.~443--469.

\bibitem{MR1918754}
{\sc J.~Byeon and Z.-Q. Wang}, {\em Symmetry breaking of extremal functions for
  the {C}affarelli-{K}ohn-{N}irenberg inequalities}, Commun. Contemp. Math., 4
  (2002), pp.~457--465.

\bibitem{Caffarelli-Kohn-Nirenberg-84}
{\sc L.~Caffarelli, R.~Kohn, and L.~Nirenberg}, {\em First order interpolation
  inequalities with weights}, Compositio Math., 53 (1984), pp.~259--275.

\bibitem{MR2079069}
{\sc E.~Carlen and M.~Loss}, {\em Logarithmic {S}obolev inequalities and
  spectral gaps}, in Recent advances in the theory and applications of mass
  transport, vol.~353 of Contemp. Math., Amer. Math. Soc., Providence, RI,
  2004, pp.~53--60.

\bibitem{Catrina-Wang-01}
{\sc F.~Catrina and Z.-Q. Wang}, {\em On the {C}affarelli-{K}ohn-{N}irenberg
  inequalities: sharp constants, existence (and nonexistence), and symmetry of
  extremal functions}, Comm. Pure Appl. Math., 54 (2001), pp.~229--258.

\bibitem{MR1223899}
{\sc K.~S. Chou and C.~W. Chu}, {\em On the best constant for a weighted
  {S}obolev-{H}ardy inequality}, J. London Math. Soc. (2), 48 (1993),
  pp.~137--151.

\bibitem{Cianchi-Ferone}
{\sc A.~Cianchi and A.~Ferone}, {\em Hardy inequalities with non-standard
  remainder terms}, Ann. Inst. H. Poincar\'e Anal. Non Lin\'eaire, 25 (2008),
  pp.~889--906.

\bibitem{MR1103113}
{\sc E.~B. Davies}, {\em Heat kernels and spectral theory}, vol.~92 of
  Cambridge Tracts in Mathematics, Cambridge University Press, Cambridge, 1990.

\bibitem{MR2060479}
{\sc J.~D{\'a}vila and L.~Dupaigne}, {\em Hardy-type inequalities}, J. Eur.
  Math. Soc. (JEMS), 6 (2004), pp.~335--365.

\bibitem{MR1940370}
{\sc M.~Del~Pino and J.~Dolbeault}, {\em Best constants for
  {G}agliardo-{N}irenberg inequalities and applications to nonlinear
  diffusions}, J. Math. Pures Appl. (9), 81 (2002), pp.~847--875.

\bibitem{MR1957678}
\leavevmode\vrule height 2pt depth -1.6pt width 23pt, {\em The optimal
  {E}uclidean {$L\sp p$}-{S}obolev logarithmic inequality}, J. Funct. Anal.,
  197 (2003), pp.~151--161.

\bibitem{0902}
{\sc J.~Dolbeault, M.~J. Esteban, M.~Loss, and G.~Tarantello}, {\em On the
  symmetry of extremals for the {C}affarelli-{K}ohn-{N}irenberg inequalities},
  Advanced Nonlinear Studies, 9 (2009), pp.~713--727.

\bibitem{DET}
{\sc J.~Dolbeault, M.~J. Esteban, and G.~Tarantello}, {\em The role of {O}nofri
  type inequalities in the symmetry properties of extremals for
  {C}affarelli-{K}ohn-{N}irenberg inequalities, in two space dimensions}, Ann.
  Sc. Norm. Super. Pisa Cl. Sci. (5), 7 (2008), pp.~313--341.

\bibitem{MR2448650}
{\sc J.~Dolbeault, B.~Nazaret, and G.~Savar{\'e}}, {\em A new class of
  transport distances between measures}, Calc. Var. Partial Differential
  Equations, 34 (2009), pp.~193--231.

\bibitem{Felli-Schneider-03}
{\sc V.~Felli and M.~Schneider}, {\em Perturbation results of critical elliptic
  equations of {C}affarelli-{K}ohn-{N}irenberg type}, J. Differential
  Equations, 191 (2003), pp.~121--142.

\bibitem{MR2214621}
{\sc S.~Filippas, V.~Maz{\cprime}ya, and A.~Tertikas}, {\em On a question of
  {B}rezis and {M}arcus}, Calc. Var. Partial Differential Equations, 25 (2006),
  pp.~491--501.

\bibitem{MR2308757}
{\sc S.~Filippas, L.~Moschini, and A.~Tertikas}, {\em Sharp two-sided heat
  kernel estimates for critical {S}chr\"odinger operators on bounded domains},
  Comm. Math. Phys., 273 (2007), pp.~237--281.

\bibitem{FMT}
\leavevmode\vrule height 2pt depth -1.6pt width 23pt, {\em {Improving $L^2$
  estimates to Harnack inequalities}}, Proc. London Math. Soc. (3), 99 (2009),
  pp.~326--352.

\bibitem{MR1918494}
{\sc S.~Filippas and A.~Tertikas}, {\em Optimizing improved {H}ardy
  inequalities}, J. Funct. Anal., 192 (2002), pp.~186--233.

\bibitem{MR2462588}
\leavevmode\vrule height 2pt depth -1.6pt width 23pt, {\em Corrigendum to:
  \cite{MR1918494}}, J. Funct. Anal., 255 (2008), p.~2095.

\bibitem{MR634248}
{\sc B.~Gidas, W.~M. Ni, and L.~Nirenberg}, {\em Symmetry of positive solutions
  of nonlinear elliptic equations in {${\R}\sp{n}$}}, in Mathematical analysis
  and applications, {P}art {A}, vol.~7 of Adv. in Math. Suppl. Stud., Academic
  Press, New York, 1981, pp.~369--402.

\bibitem{Gross75}
{\sc L.~Gross}, {\em Logarithmic {S}obolev inequalities}, Amer. J. Math., 97
  (1975), pp.~1061--1083.

\bibitem{MR1892180}
{\sc M.~Hoffmann-Ostenhof, T.~Hoffmann-Ostenhof, and A.~Laptev}, {\em A
  geometrical version of {H}ardy's inequality}, J. Funct. Anal., 189 (2002),
  pp.~539--548.

\bibitem{MR1731336}
{\sc T.~Horiuchi}, {\em Best constant in weighted {S}obolev inequality with
  weights being powers of distance from the origin}, J. Inequal. Appl., 1
  (1997), pp.~275--292.

\bibitem{Landau-Lifschitz-67}
{\sc L.~Landau and E.~Lifschitz}, {\em Physique th\'eorique. Tome III:
  M\'ecanique quantique. Th\'eorie non relativiste. (French)}, Deuxi\`eme
  \'edition. Translated from russian by E. Gloukhian. \'Editions Mir, Moscow,
  1967.

\bibitem{MR2051129}
{\sc C.-S. Lin and Z.-Q. Wang}, {\em Symmetry of extremal functions for the
  {C}affarelli-{K}ohn-{N}irenberg inequalities}, Proc. Amer. Math. Soc., 132
  (2004), pp.~1685--1691.

\bibitem{MR0311856}
{\sc B.~Muckenhoupt}, {\em Hardy's inequality with weights}, Studia Math., 44
  (1972), pp.~31--38.
\newblock Collection of articles honoring the completion by Antoni Zygmund of
  50 years of scientific activity,~ I.

\bibitem{MR2317190}
{\sc J.~H. Petersson}, {\em Best constants for {G}agliardo-{N}irenberg
  inequalities on the real line}, Nonlinear Anal., 67 (2007), pp.~587--600.

\bibitem{MR0463908}
{\sc G.~Talenti}, {\em Best constant in {S}obolev inequality}, Ann. Mat. Pura
  Appl. (4), 110 (1976), pp.~353--372.

\bibitem{Tertikas-Tintarev}
{\sc A.~Tertikas and K.~Tintarev}, {\em On existence of minimizers for the
  {H}ardy-{S}obolev-{M}az'ya inequality}, Ann. Mat. Pura Appl.,  (2007).

\bibitem{MR1447044}
{\sc G.~Toscani}, {\em Sur l'in\'egalit\'e logarithmique de {S}obolev}, C. R.
  Acad. Sci. Paris S\'er. I Math., 324 (1997), pp.~689--694.

\bibitem{MR1760280}
{\sc J.~L. V{\'a}zquez and E.~Zuazua}, {\em The {H}ardy inequality and the
  asymptotic behaviour of the heat equation with an inverse-square potential},
  J. Funct. Anal., 173 (2000), pp.~103--153.

\bibitem{MR1923362}
{\sc E.~J.~M. Veling}, {\em Lower bounds for the infimum of the spectrum of the
  {S}chr\"odinger operator in {$\mathbb R\sp N$} and the {S}obolev
  inequalities}, JIPAM. J. Inequal. Pure Appl. Math., 3 (2002), pp.~Article 63,
  22 pp..

\bibitem{MR2048612}
\leavevmode\vrule height 2pt depth -1.6pt width 23pt, {\em Corrigendum on the
  paper: ``{L}ower bounds for the infimum of the spectrum of the
  {S}chr\"odinger operator in {${\mathbb R}\sp N$} and the {S}obolev
  inequalities'' [{JIPAM}. {J}. {I}nequal. {P}ure {A}ppl. {M}ath. {\bf 3}
  (2002), no. 4, {A}rticle 63, 22 pp; mr1923362]}, JIPAM. J. Inequal. Pure
  Appl. Math., 4 (2003), pp.~Article 109, 2 pp..

\bibitem{MR2003359}
{\sc Z.-Q. Wang and M.~Willem}, {\em Caffarelli-{K}ohn-{N}irenberg inequalities
  with remainder terms}, J. Funct. Anal., 203 (2003), pp.~550--568.

\bibitem{MR479373}
{\sc F.~B. Weissler}, {\em Logarithmic {S}obolev inequalities for the
  heat-diffusion semigroup}, Trans. Amer. Math. Soc., 237 (1978), pp.~255--269.

\end{thebibliography}
\small 

\end{document}